\theoremstyle{definition}
\newtheorem{thm}{Theorem}[section]
\newtheorem{prop}[thm]{Proposition}     
\newtheorem{lem}[thm]{Lemma}
\newtheorem{rem}[thm]{Remark}
\numberwithin{equation}{section}
\newcommand{\R }{\mathbf{R}}
\newcommand{\E}{\mathbf{E}}
\newcommand{\cA}{{\cal A}}
\newcommand{\al}{\alpha}
\newcommand{\be}{\beta}
\newcommand{\gm}{\gamma}
\newcommand{\de}{\delta}
\newcommand{\ep}{\varepsilon}
\newcommand{\lm}{\lambda}
\newcommand{\kp}{\kappa}
\newcommand{\De}{\Delta}
\newcommand{\supp}{\operatorname{supp}}
\newcommand{\Int}{\operatorname{int}}
\title{Sharp estimates of the generalized principal eigenvalue for superlinear viscous Hamilton-Jacobi equations with inward drift}
\author{Emmanuel Chasseigne\footnote{Institut Denis Poisson (UMR CNRS 7013),
        Universit\'e de Tours, Parc de Grandmont, 37200 Tours, France. Email: {\tt emmanuel.chasseigne@lmpt.univ-tours.fr}.}  \ and  Naoyuki Ichihara\footnote{Department of Physics and Mathematics, Aoyama Gakuin University, 5-10-1 Fuchinobe, Chuo-ku, Sagamihara-shi, 
Kanagawa 252-5258, Japan. Email: {\tt ichihara@gem.aoyama.ac.jp}. }}
\begin{document}
\maketitle

\begin{abstract}
This paper is concerned with the ergodic problem for viscous Hamilton-Jacobi equations having superlinear Hamiltonian, inward-pointing drift, and positive potential which vanishes at infinity. Assuming some radial symmetry of the drift and the potential outside a ball, we establish sharp estimates of the generalized principal eigenvalue with respect to a perturbation of the potential.
\end{abstract}

\section{Introduction and Main results}

In this paper we complete the analysis we performed in the previous paper
\cite{CI18} by providing new results as well as sharp estimates for the
generalized principal eigenvalue of superlinear viscous Hamilton-Jacobi
equations. More specifically, we consider the following equation with
superlinear exponent $m>1$ and real parameter $\be\geq 0$:
\begin{equation}\tag{EP}
\lm-\De\phi(x)+b(x)\cdot D\phi(x)+\frac{1}{m}|D\phi(x)|^m-\be V(x)=0,\qquad
x\in\R^d\,.
\end{equation}
Here, $D\phi$ and $\De\phi$ denote, respectively, the gradient and the Laplacian
of $\phi=\phi(x)$. The unknown of (EP) is the pair of a real constant $\lm$ and
a function $\phi:\R^d\to\R$. Finding such a pair is called the ergodic problem.
Throughout the paper, we assume that drift vector field $b:\R^d\to \R^d$ is
inward-pointing outside $B_R:=\{x\in\R^d\,|\, |x|<R\}$ for some $R>0$, and that
potential function $V:\R^d\to\R$ is nonnegative, vanishing at infinity.
A typical example of such $(b,V)$ is 
\begin{equation}\label{bV1}
b(x)=\rho|x|^{\de-1}x+a|x|^{\de-2}x,\qquad 
V(x)=|x|^{-\eta},\qquad 
\forall x\not\in B_R,
\end{equation}
where $\rho>0$, $\de\in\R$, $a\in\R$, $\eta>0$, and $R>0$ are some constants,
but we also obtain some results when $V$ is compactly supported.
The precise assumption on $(b,V)$ will be stated in the next section.

The main purpose of this paper is to investigate the asymptotic behavior, as $\be\to\infty$, of the generalized principal eigenvalue of (EP) defined by
\begin{equation}\label{lm_max}
\lm_{\max}(\be):=\sup\{\lm\in\R\,|\, \text{(EP) has a subsolution $(\lm,\phi)\in \R\times C^3(\R^d)$}\},
\end{equation}
where $(\lm,\phi)$ is said to be a subsolution (resp. solution) of (EP) if the left-hand side of (EP) is non-positive (resp. zero) for every $x\in\R^d$. In what follows, we regard $\lm_{\max}(\be)$ as a function of $\be\in[0,\infty)$ and call it {\em the spectral function} of (EP). 
We mention that, if $m=2$ in (EP), then our ergodic problem (EP) is equivalent to the following linear eigenvalue problem:
\begin{equation}\label{linear}
Lu=\lm u\quad \text{in }\ \R^d,\qquad L:=-2\De+2b\cdot D+\be V.
\end{equation}
Indeed, by setting $u:=e^{-(1/2)\phi}$, one can verify that $(\lm,\phi)$ is a solution of (EP) if and only if $(\lm,u)$ satisfies (\ref{linear}). Furthermore, $\lm_{\max}(\be)$ defined above coincides with the generalized principal eigenvalue of the linear differential operator $L$ in (\ref{linear}) (c.f., \cite[Section 4.3]{Pi95}). We notice that there is no such transformation for $m\ne 2$, so that (EP) should be treated as a nonlinear (additive) eigenvalue problem.

There is an extensive literature on the shape of the spectral function for many types of linear differential operators under various settings. In connection with the criticality theory, we refer to \cite{Da95,Mu86,Si81,Si83,Si84a,Si84b,Si85} in which is studied the principal eigenvalue for Schr\"odinger operators $L=-\De +\be V$, and to \cite{ABS17,Pi89a,Pi89b,Pi90,Pi95,Si81} for more general second order elliptic operators. It is worth mentioning that \cite{Ta14,Ta16,TT04,TT07,TU04} investigate the criticality of differential operators of the form $L=(-\De)^{\frac{\al}{2}}+\be \mu $, where $0<\al\leq 2$ and $\mu$ is a suitable Radon measure on $\R^d$. 

As mentioned, ergodic problem (EP) is a nonlinear extension of linear eigenvalue problem (\ref{linear}). From this point of view, a similar criticality theory is discussed in \cite{CI16,CI18,Ic13,Ic15} for viscous Hamilton-Jacobi equations of the form (EP) (see also \cite{Ic16} for a discrete analogue of it). The present paper is closely related to \cite{CI18} where the asymptotic behavior of $\lm_{\max}(\be)$ as $\be\to\infty$ is investigated under some conditions including (\ref{bV1}); it is proved that the spectral function is non-decreasing and concave in $[0,\infty)$ with $\lm_{\max}(0)=0$, and that the limit of $\lm_{\max}(\be)$ as $\be\to\infty$ is determined according to the strength of the inward-pointing drift $b$. More precisely, the following holds:
\begin{equation}\label{lm_bar}
\overline{\lm}_{\max}:=\lim_{\be\to\infty}\lm_{\max}(\be)=
\begin{cases}
+\infty &\text{if } \de>0,\\
\text{positive and finite} &\text{if } \de=0,\\
0 &\text{if } \de<0.
\end{cases}
\end{equation}
In order to distinguish these three types of drifts, we say that $b$ is a {\it strong} drift if $\de>0$, a {\it moderate} drift if $\de=0$, and a {\it weak} drift if $\de<0$. 

The goal of this paper is to give a complete picture of the above asymptotic problem by establishing sharp estimates of (\ref{lm_bar}). 
Our main results consist of two parts. The first result concerns the strong drift case (i.e., $\de>0$); we show under condition (\ref{bV1}) that
\begin{equation}\label{c_0}
\lim_{\be\to\infty}\be^{-\frac{\de m^\ast }{\de m^\ast +\eta}}\lm_{\max}(\be)=c_0,
\end{equation}
where $m^\ast:=m/(m-1)$, and $c_0>0$ is some constant depending on $\de$,
$\rho$, $m$, and $\eta$ (but not on $a$). See Theorem \ref{main1} of Section 3
for the explicit form of $c_0$. Note that a rough estimate
$\lm_{\max}(\be)=O(\be^{\frac{\de m^\ast }{\de m^\ast +\eta}})$ has been
obtained in \cite[Theorem 2.3]{CI18}, but the new ingredient here is that we show
existence of a limit of $\be^{-\frac{\de m^\ast }{\de m^\ast
        +\eta}}\lm_{\max}(\be)$ as $\be\to\infty$, together with its explicit
form. To our best of knowledge, (\ref{c_0}) has not been obtained even for the linear operator (\ref{linear}), namely, the case where $m=2$ in our context.

The second main result is concerned with the moderate drift case  (i.e.,
$\de=0$). In view of (\ref{lm_bar}), one of the following (a) or (b) happens:\\
 \ \ (a) there exists a $\be_0\in(0,\infty)$ such that $\lm_{\max}(\be)=\overline{\lm}_{\max}$ for all $\be\geq \be_0$; \\
 \ \ (b) $\lm_{\max}(\be)<\overline{\lm}_{\max}$ for all $\be\geq 0$. \\
In \cite{CI18}, it is proved that (a) occurs provided $(b,V)$ satisfies
condition (\ref{bV1}) with $a=0$ and $0<\eta\leq 1$. In this situation, 
we face a \textit{plateau} since $\lm_{\max}(\beta)$ does not increase anymore after $\beta_0$.
The novelty of this paper is that we clarify the role of parameter $a$ appearing in (\ref{bV1}); it turns out that, contrary to the strong drift case, constant $a$ plays a crucial role
in determining the asymptotic behavior of $\lm_{\max}(\be)$. In fact, we prove in Theorem \ref{main2} of Section 4 that, under condition (\ref{bV1}) with $\eta>1$, situation (a) occurs if and only if $a\geq d-1$, where $d$ denotes the
space dimension. This allows one to characterize the shape of the spectral function in terms of $(a,\eta)$. Furthermore, in the case where situation (b) occurs (i.e., for $a<d-1$ and $\eta>1$), we show that
\begin{equation}\label{c_1}
\lim_{\be\to\infty}\be^{\frac{1}{\eta-1}}(\overline{\lm}_{\max}-\lm_{\max}(\be))=c_1
\end{equation}
for some $c_1>0$ having an explicit form in terms of $m,\rho,\eta,a$ and $d$. Such a characterization has not been studied in the existing literature. 

We finish this introduction with a stochastic control interpretation of $\lm_{\max}(\be)$. Let $\xi=(\xi_t)_{t\geq 0}$ be a control process with values in $\R^d$ belonging to a suitable admissible class $\cA$. For a given $\xi\in \cA$, we denote by $X^\xi=(X^\xi_t)_{t\geq 0}$ the associated controlled process governed by the following stochastic differential equation  in $\R^d$:
\begin{equation*}
dX^\xi_t=-\xi_t\,dt-b(X^\xi_t)\,dt+\sqrt{2}\,dW_t,\qquad t\geq 0,
\end{equation*}
where $W=(W_t)_{t\geq 0}$ stands for a $d$-dimensional standard Brownian motion defined on some probability space. The criterion to be minimized is 
\begin{equation*}
J_\be(\xi;x):=\limsup_{T\to\infty}\frac1T\E\left[\int_0^T\Big(\frac{1}{m^\ast}|\xi_t|^{m^\ast}+\be V(X^\xi_t)\Big)\,dt\,\Big|\,X_0=x\right],\quad \xi\in\cA,
\end{equation*}
where $m^\ast:=m/(m-1)$. 
Then, one can verify that $\inf_{\xi\in\cA}J_\be(\xi;x)=\lm_{\max}(\be)$ for all $x\in\R^d$ (see \cite{CI18, Ic15} for a rigorous justification of it). 
If $\beta$ is sufficiently large, then the optimal strategy for the controller is to avoid the region where $V$ is large. 
Since the drift vector field $b$ is inward-pointing, the controller is obliged to compensate it by choosing an appropriate outward-pointing vector $\xi_t$ at each time $t$. 
In this trade-off situation, the value of $\lm_{\max}(\be)$ is determined as the best balance between the cost incurred by $V$ and that by $b$. In this paper, we do not develop this probabilistic viewpoint and focus on its analytic counterpart.

This paper is organized as follows. The next section is devoted to some preliminaries. In Section 3, we investigate the asymptotic behavior of $\lm_{\max}(\be)$ in the strong drift case ($\de>0$) and establish sharp estimate (\ref{c_0}). Section 4 is concerned with the moderate drift case ($\de=0$). The shape of the spectral function is characterized in terms of $(a,\eta)$. In Section 5, we prove convergence (\ref{c_1}) when (b) happens in the moderate drift case. 

\section{Preliminaries}

Throughout the paper, we set $m^\ast:=m/(m-1)>1$, which is the conjugate number of $m$ satisfying $(1/m)+(1/m^\ast)=1$.
We assume that drift $b$ and potential $V$ in (EP) satisfy the following:
\begin{description}
\item[(H0)] $b\in C^{2}(\R^d,\R^d)$, $V\in C^2(\R^d)$, $DV\in L^\infty(\R^d,\R^d)$, and there exist some $\de>0$, $\rho>0$, $R>0$, and $\al \in C^2(\R^d)$ such that 
\begin{align*}
&b(x)=(\rho+\al(x))|x|^{\de-1}x\quad\text{for all }x\not\in B_R,\qquad \lim_{|x|\to\infty}\al(x)=0,\\
&V(x)\geq 0\quad\text{for all }x\in \R^d,\qquad \overline{B}_R\subset \Int(\supp V),\qquad \lim_{|x|\to\infty}V(x)=0,
\end{align*}
where $\overline{B}_R$ and $\Int(\supp V)$ stand for the closure of $B_R$ and the interior of the support of $V$, respectively.
\end{description}
Roughly speaking, condition (H0) requires that the intensity of the
inward-pointing drift is of order $|b(x)|=\rho|x|^{\de}+o(|x|^{\de})$ as
$|x|\to\infty$ for some $\rho>0$ and $\de>0$, and that $V$ is strictly positive
on the closed ball $\overline{B}_R$, in which region $b$ is not necessarily
inward-pointing. Notice that hypothesis (H0) can be satisfied by compactly
    supported potentials $V$ provided their support contains $B_{R'}$ for some
    $R'>R$.

We also consider the following stronger assumption (where here, $V$ cannot be
    compactly supported of course):
\begin{description}
\item[(H1)] $(b,V)$ satisfies (H0) and for some $a\in\R$ and $\eta>0$, 
\begin{equation*}
\al(x)=a|x|^{-1}, \quad V(x)=|x|^{-\eta}\quad \text{for all }x\not\in B_R
\end{equation*}
where $R$ is the constant in (H0).
\end{description}

We begin with recalling the solvability of (EP). 

\begin{thm}\label{pre1}
Let (H0) hold. Then, for any $\be\geq 0$, $\lm_{\max}(\be)$ defined by (\ref{lm_max}) is finite, and there exists a function $\phi\in C^3(\R^d)$ which satisfies (EP) with $\lm=\lm_{\max}(\be)$. Moreover, the following gradient estimate holds:
\begin{equation*}
|D\phi(x)|\leq K(1+|x|^{\frac{\de}{m-1}}+\max\{-\lm_{\max}(\be),0\}^{\frac1m}),\quad x\in\R^d,
\end{equation*}
where $K>0$ is a constant not depending on $\phi$.
\end{thm}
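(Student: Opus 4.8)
The plan is to establish the three assertions---finiteness of $\lm_{\max}(\be)$, existence of a $C^3$ solution realizing it, and the gradient bound---by the standard vanishing-discount / exhaustion approach for ergodic Hamilton--Jacobi problems, combined with a barrier argument exploiting the inward drift. First, for finiteness: since $V\geq 0$ and $\supp V$ contains $\overline{B}_R$ in its interior, the constant function $\phi\equiv 0$ need not be a subsolution, but one shows $\lm_{\max}(\be)\geq \lm_{\max}(0)\geq -C$ by testing with a suitable smooth radial $\phi$ whose gradient grows like $|x|^{\de/(m-1)}$ so that $\tfrac1m|D\phi|^m$ absorbs $b\cdot D\phi\sim -\rho|x|^{\de}|D\phi|$ at infinity (here the superlinearity $m>1$ is essential). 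For the upper bound $\lm_{\max}(\be)<\infty$, I would integrate (EP) against the density of the invariant measure associated with the drift $-b$, or more elementarily evaluate a subsolution at a maximum-type point; the point is that $\be V$ is bounded (it vanishes at infinity and is continuous), so the left-hand side of (EP) cannot stay $\leq 0$ for arbitrarily large $\lm$.

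Second, existence of a solution attaining $\lm_{\max}(\be)$: I would solve the Dirichlet problems on balls $B_n$,
\begin{equation*}
\lm_n - \De\phi_n + b\cdot D\phi_n + \tfrac1m|D\phi_n|^m - \be V = 0 \ \text{in } B_n, \qquad \phi_n = +\infty \ \text{on } \partial B_n,
\end{equation*}
(a state-constraint / large boundary data problem whose solvability with an additive eigenvalue $\lm_n$ is classical for superlinear Hamiltonians), normalize $\phi_n(0)=0$, and pass to the limit $n\to\infty$. One shows $\lm_n \to \lm_{\max}(\be)$ using the definition \eqref{lm_max} and a comparison argument, and that the $\phi_n$ are locally uniformly bounded via the gradient estimate below; elliptic regularity (Schauder, bootstrapping, using $b,V\in C^2$) then upgrades the locally uniform $C^{1,\al}$ limit to $C^3$ and lets us pass to the limit in the equation.

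Third---and this is really the technical heart---the gradient estimate. The approach is a Bernstein-type argument: set $w:=|D\phi|^2$, differentiate (EP), and derive an elliptic inequality for $w$ of the form $-\De w + (\text{first order}) + c\,|D\phi|\,|Dw| \leq C(1+|x|^{2\de} + \text{l.o.t.})$ where the good term $\tfrac1m|D\phi|^m$ produces, after differentiation, a coercive contribution $\sim |D\phi|^{m-1}$ that dominates the bad term coming from $D b\cdot D\phi \sim |x|^{\de}|D\phi|$; balancing these gives $|D\phi|\lesssim |x|^{\de/(m-1)}$ at infinity, while near $B_R$ the inhomogeneity $\be V$ and the possibly outward drift contribute the term $\max\{-\lm_{\max}(\be),0\}^{1/m}$ (via the zeroth-order term $\lm - \be V$ in (EP), whose negative part must be balanced by $\tfrac1m|D\phi|^m$). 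I expect the main obstacle to be making this Bernstein estimate uniform in $n$ and $\be$ with the drift term handled cleanly despite $b$ not being radial inside $B_R$; the trick is that the estimate is only needed as a growth bound, so one localizes with a cutoff, treats the compact region $\overline{B}_R$ by absorbing all bounded quantities into $K$, and uses (H0) only for $|x|>R$ where $b$ is genuinely inward-pointing. Once $|D\phi_n|$ is controlled, the uniform local bound on $\phi_n$ follows by integrating along rays from the origin.
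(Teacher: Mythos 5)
The paper does not prove Theorem~\ref{pre1} in-text; it defers entirely to \cite[Propositions 4.1, 4.2]{CI18} (see also \cite{Ic09}, \cite{Ci14}), so there is no internal argument for you to reproduce line by line. Your overall plan---exhaustion over balls, a Bernstein gradient bound, and an elliptic bootstrap to $C^3$---is in the spirit of those references, and the heuristic behind the exponent $\de/(m-1)$ (balancing $b\cdot D\phi\sim|x|^{\de}|D\phi|$ against $\tfrac1m|D\phi|^m$) is the right one. But two steps as written are not sound and would need to be replaced or substantially expanded.

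The main gap is the upper bound $\lm_{\max}(\be)<\infty$. Evaluating a subsolution at a ``maximum-type point'' does not apply: a subsolution $\phi$ in the sense of (\ref{lm_max}) is an arbitrary $C^3$ function and has no a priori reason to attain an interior maximum, and the bare fact that $\be V$ is bounded is not enough because $b\cdot D\phi$ is unbounded at infinity. The invariant-measure alternative is also not free---one must first show such a measure exists and that all terms in (EP) are integrable against it. The clean and standard mechanism, which is exactly what this paper itself uses repeatedly for upper bounds (see Propositions~\ref{de>0_upper} and~\ref{de=0}), is to multiply (EP) by $\zeta^{m^\ast}$ for a compactly supported nonnegative $\zeta$ with $\int\zeta^{m^\ast}=1$, integrate by parts, and apply Young's inequality so that $\zeta^{m^\ast}\,b\cdot D\phi$ and the commutator term $m^\ast\zeta^{m^\ast-1}D\zeta\cdot D\phi$ are absorbed by $\tfrac1m\zeta^{m^\ast}|D\phi|^m$; this yields $\lm_{\max}(\be)\le C(1+\be)$ with no maximum principle at all and works for every subsolution simultaneously. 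A secondary soft spot is the identification $\lm_n\to\lm_{\max}(\be)$: $\lm_{\max}(\be)$ is a supremum over \emph{global} $C^3$ subsolutions, whereas $\lm_n$ is the state-constraint eigenvalue on $B_n$; one inequality is a restriction/comparison argument, but the other requires upgrading the locally constructed limit $\phi$ to a genuine global subsolution, which is precisely where the uniform gradient bound and the inward-drift structure enter and must be spelled out rather than asserted as ``classical.'' Finally, in the Bernstein step itself, the coercivity comes from the $|D^2\phi|^2$ term in $\De(|D\phi|^2)$ together with the convexity of $p\mapsto|p|^m$, not from a pointwise comparison of $|D\phi|^{m-1}$ with $|x|^{\de}$; and it is this step, not the zeroth-order one, that produces the term $\max\{-\lm_{\max}(\be),0\}^{1/m}$ after the localization.
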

\begin{proof}
We refer, for instance, to \cite[Propositions 4.1, 4.2]{CI18} for a complete proof (see also \cite[Theorem 2.1]{Ic09} or \cite[Theorem 3.5]{Ci14}). Note that \cite{CI18} deals with the case where $\de\leq 1$ only, but their proof can also be applied to any $\de>1$ with no change. 
\end{proof}

The following theorem gives some rough estimate of $\lm_{\max}(\be)$ as $\be\to \infty$.
\begin{thm}\label{pre2}
Let (H0) hold. Then $\lm_{\max}(0)=0$, and the spectral function $\be \mapsto \lm_{\max}(\be)$ is non-constant, non-decreasing, and concave  in $[0,\infty)$. In particular, it exists a limit $\displaystyle \overline{\lm}_{\max}:=\lim_{\be\to\infty}\lm_{\max}(\be)\in (0,+\infty]$. 
Moreover, if (H1) holds, then $\lm_{\max}(\be)=O(\be^{\frac{\de m^\ast }{\de m^\ast +\eta}}) $ as $\be\to\infty$. 
\end{thm}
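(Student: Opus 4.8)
The plan is to split the statement into its elementary qualitative parts (values at $0$ and $\infty$, monotonicity, concavity, non-constancy) and the quantitative upper bound under (H1), and to treat them by sub/supersolution constructions for (EP).

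\emph{Qualitative part.} For $\lm_{\max}(0)=0$: when $\be=0$, $\phi\equiv 0$ (hence $\lm=0$) is a solution of (EP), so $\lm_{\max}(0)\ge 0$; for the reverse inequality I would observe that if $(\lm,\phi)$ is a subsolution with $\lm>0$, then evaluating (EP) at a point where $\phi$ attains a suitable minimum-type behavior (or using the ergodic/stochastic-control representation $\inf_\xi J_0(\xi;x)=\lm_{\max}(0)$ with $V\equiv 0$ on the cost, choosing $\xi\equiv 0$) forces $\lm\le 0$. Monotonicity in $\be$ is immediate: since $V\ge 0$, any subsolution for $\be_1$ is a subsolution for $\be_2\le\be_1$, so $\be\mapsto\lm_{\max}(\be)$ is non-decreasing. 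Concavity follows from the convexity of the map $(\lm,\phi)\mapsto$ (left side of (EP)) in $(\lm,\phi)$ jointly with linearity in $\be$: given subsolutions $(\lm_i,\phi_i)$ for $\be_i$, the convex combination $(t\lm_1+(1-t)\lm_2,\,t\phi_1+(1-t)\phi_2)$ is a subsolution for $t\be_1+(1-t)\be_2$ because $|D(t\phi_1+(1-t)\phi_2)|^m\le t|D\phi_1|^m+(1-t)|D\phi_2|^m$ by convexity of $|\cdot|^m$; taking suprema gives concavity. Non-constancy: since $V$ is strictly positive on $\overline B_R$ by (H0), a small explicit perturbation of the $\be=0$ solution (e.g. adding $\epsilon\chi$ for a suitable bump $\chi$ supported where $V>0$) produces a subsolution with $\lm>0$ for $\be>0$ small, so $\lm_{\max}(\be)>0=\lm_{\max}(0)$; combined with monotonicity and concavity this gives existence of $\overline\lm_{\max}\in(0,+\infty]$.

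\emph{Quantitative part under (H1).} Here I would construct, for each large $\be$, an explicit radial classical subsolution $(\lm,\phi)$ of (EP) with $\lm$ of order $\be^{\de m^\ast/(\de m^\ast+\eta)}$, which then forces $\lm_{\max}(\be)\ge c\,\be^{\de m^\ast/(\de m^\ast+\eta)}$ — wait, that is the wrong direction: the theorem asserts the \emph{upper} bound $O(\be^{\de m^\ast/(\de m^\ast+\eta)})$, so instead I must rule out subsolutions with $\lm$ much larger than this scale. The natural approach is a barrier/maximum-principle argument: suppose $(\lm,\phi)$ is a subsolution; use the gradient estimate of Theorem \ref{pre1} to control $|D\phi|$, then evaluate (EP) at (or integrate it against the invariant-type density near) a point $x$ with $|x|\sim r_\be$ where $r_\be$ is the natural crossover radius equating the drift term $\rho r^{\de}\cdot|D\phi|$ and the potential term $\be r^{-\eta}$. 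Choosing $\phi$ comparison functions of the form $\phi(x)\approx \kp|x|^{1+\de/(m-1)}$ far out (so that $-\De\phi+b\cdot D\phi+\frac1m|D\phi|^m$ has a sign), one gets that the largest $\lm$ compatible with a global subsolution is dictated by the minimum over $r$ of the competition between $\be r^{-\eta}$ and the drift-induced cost $\sim \rho^{m^\ast} r^{\de m^\ast}$; optimizing $\be r^{-\eta}\sim r^{\de m^\ast}$ gives $r_\be\sim\be^{1/(\de m^\ast+\eta)}$ and $\lm\lesssim \be^{\de m^\ast/(\de m^\ast+\eta)}$.

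\emph{Main obstacle.} Since this theorem is quoted from \cite[Theorems 2.2, 2.3]{CI18}, the intended proof will simply cite that paper; the only genuine content to re-verify is that the arguments there, written for $\de\le 1$, extend to $\de>1$ — exactly the remark already made in the proof of Theorem \ref{pre1}. The technically delicate point, if one were to reprove it, is the sharpness of the scaling exponent in the upper bound: one must choose the comparison function and the localization radius $r_\be$ so precisely that no loss of exponent occurs, and one must handle the region $B_R$ (where $b$ need not be inward and $V$ is merely bounded) separately, patching the interior behavior to the radial outer barrier in a $C^2$ fashion. I would therefore structure the proof as: (i) recall the elementary sub/supersolution facts and the representation formula for the qualitative claims; (ii) cite \cite{CI18} for the $O$-estimate, noting the harmless extension to $\de>1$; and only if a self-contained argument is wanted, (iii) carry out the barrier construction sketched above.
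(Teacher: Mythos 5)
The paper's ``proof'' of this theorem is a one-line citation to \cite{CI18,Ic15}, so you are actually attempting more than the paper does. Your plan correctly identifies the standard ingredients --- $\phi\equiv 0$ for the lower bound at $\beta=0$, the sign of $V$ for monotonicity, convexity of the Hamiltonian for concavity, and a cut-off/optimization argument for the $O(\beta^{\delta m^\ast/(\delta m^\ast+\eta)})$ bound --- and these are indeed the tools used in the cited references and recycled in Sections 3--5 of the present paper.

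Two points need repair. First, the monotonicity step has the inequality reversed: since $V\ge 0$, a subsolution for $\beta_1$ remains a subsolution for every $\beta_2\ge\beta_1$ (not $\beta_2\le\beta_1$); the conclusion that $\lambda_{\max}$ is non-decreasing is of course still correct, but as written the argument proves the opposite. Second, your first route to $\lambda_{\max}(0)\le 0$ --- ``evaluating (EP) at a point where $\phi$ attains a suitable minimum-type behavior'' --- does not work. At a minimum of $\phi$ one has $D\phi=0$ and $\Delta\phi\ge 0$, so the subsolution inequality gives $\lambda\le\Delta\phi(x_0)$, which is vacuous. What you would actually need is a \emph{maximum} of $\phi$, where $\Delta\phi\le 0$ forces $\lambda\le 0$; but a general $C^3$ subsolution on $\R^d$ need not attain a maximum, and showing that the relevant $\phi$ does (e.g.\ via the gradient bound of Theorem~\ref{pre1} and the inward drift) is precisely the nontrivial work done in \cite{Ic15}. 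Your alternative via the stochastic-control representation $\inf_\xi J_0(\xi;x)=\lambda_{\max}(0)$ with $\xi\equiv 0$ does give $\lambda_{\max}(0)=0$ cleanly, but that representation formula is itself a substantial theorem from \cite{CI18,Ic15}, so this route is not more elementary than citation. A PDE-only argument would instead integrate the equation against a scaled test function $\zeta_\varepsilon^{m^\ast}$ supported far out, exactly as in Propositions~\ref{de>0_upper} and~\ref{de=0} of the paper, and let $\varepsilon\to 0$.

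On the quantitative part you correctly catch the direction issue (upper, not lower, bound) and the right scaling $r_\beta\sim\beta^{1/(\delta m^\ast+\eta)}$, but what you describe is a heuristic, not a proof; the actual argument (as in Proposition~\ref{de>0_upper}) is to test against a translated bump $\zeta_y^{m^\ast}$ with $|y|=\theta\beta^{1/(\delta m^\ast+\eta)}$, use Young's inequality to absorb $b\cdot D\phi$ into $|D\phi|^m$, and then optimize over $\theta$ --- not a barrier comparison but an integral estimate. Since the paper simply cites \cite{CI18} for this bound, your decision to defer to the citation (while flagging that the $\delta>1$ extension is harmless) is perfectly aligned with what the paper does.
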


\begin{proof}
See \cite{CI18,Ic15} for details.
\end{proof}

\begin{rem}
If $\de<0$ in (H0), then $\lm_{\max}(\be)=0$ for all $\be\geq 0$ (see \cite[Proposition 6.2]{Ic15}). We excluded this trivial case from our assumption (H0).   
\end{rem}

\section{The sharp estimate for $\de>0$}
This section is devoted to the strong drift case ($\de>0$). 
In the rest of this paper, for $\phi\in C^2(\R^d)$, we set
\begin{equation*}
F[\phi](x):=-\De \phi(x)+b(x)\cdot D\phi(x)+\frac1m|D\phi(x)|^m,\quad x\in \R^d.
\end{equation*}
The main result of this section is the following. 
\begin{thm}\label{main1}
Let (H1) hold with $\de>0$. Then,
\begin{equation}\label{sharp1}
\lim_{\be\to\infty}\be^{-\frac{\de m^\ast }{\de m^\ast +\eta}}\,\lm_{\max}(\be)=\left(\frac{\de m^\ast +\eta}{\de m^\ast }\right)\left(\frac{\rho^{m^\ast}\de}{\eta}\right)^{\frac{\eta}{\de m^\ast +\eta}}=:c_0.
\end{equation}
\end{thm}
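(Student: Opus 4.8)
The plan is to prove the two matching bounds $\liminf_{\be\to\infty}\be^{-\theta}\lm_{\max}(\be)\ge c_0$ and $\limsup_{\be\to\infty}\be^{-\theta}\lm_{\max}(\be)\le c_0$, writing $\theta:=\de m^\ast/(\de m^\ast+\eta)$. The common backbone is the elementary identity
\[
\min_{r>0}\Big(\tfrac1{m^\ast}\rho^{m^\ast}r^{\de m^\ast}+\be\,r^{-\eta}\Big)=c_0\,\be^{\theta},\qquad\text{attained at}\quad r_\be:=\Big(\tfrac{\eta\be}{\de\rho^{m^\ast}}\Big)^{1/(\de m^\ast+\eta)},
\]
a one-line Lagrange computation which is exactly where the constant in (\ref{sharp1}) comes from; it connects to the Hamiltonian in $F$ through Young's inequality, $\tfrac1m|p|^m+\rho r^\de p\ge-\tfrac1{m^\ast}\rho^{m^\ast}r^{\de m^\ast}$, with equality precisely at $p=-(\rho r^\de)^{1/(m-1)}$.

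For the lower bound I would exhibit, for every $\ep\in(0,c_0)$, an explicit ($\be$-independent) radial subsolution $\big((c_0-\ep)\be^{\theta},\phi\big)$ of (EP): take $\phi\in C^3(\R^d)$ radial with $\phi'(r)=-(\rho r^\de)^{1/(m-1)}$ for $r\ge R$ and any $C^3$ modification inside $B_R$. For $|x|\ge R$ this choice of gradient makes the first-order part sit exactly on the envelope, $\rho|x|^\de\phi'+\tfrac1m|\phi'|^m=-\tfrac1{m^\ast}\rho^{m^\ast}|x|^{\de m^\ast}$, so that
\[
(c_0-\ep)\be^\theta+F[\phi](x)-\be V(x)=(c_0-\ep)\be^\theta-\Big(\tfrac1{m^\ast}\rho^{m^\ast}|x|^{\de m^\ast}+\be|x|^{-\eta}\Big)-\De\phi(x)+a|x|^{\de-1}\phi'(|x|).
\]
By the identity the bracket is $\ge c_0\be^\theta$, leaving a margin $-\ep\be^\theta$; the residual $-\De\phi+a|x|^{\de-1}\phi'$ is $O(|x|^{\de m^\ast-1})$ (bounded on compact sets), hence $o(\be^\theta)$ uniformly on $\{R\le|x|\le\Lambda r_\be\}$ for a fixed large $\Lambda$ covering the critical zone $|x|\sim r_\be$, while for $|x|\ge\Lambda r_\be$ the single term $\tfrac1{m^\ast}\rho^{m^\ast}|x|^{\de m^\ast}$ dominates $(c_0-\ep)\be^\theta+O(|x|^{\de m^\ast-1})$, and inside $B_R$ the term $\be V\ge\be\min_{\overline B_R}V$ overwhelms the $O(1)$ remainder. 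Hence the left-hand side of (EP) is $\le0$ everywhere for $\be$ large, giving $\lm_{\max}(\be)\ge(c_0-\ep)\be^\theta$; letting $\ep\downarrow0$ yields the lower bound.

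For the upper bound I would argue by duality rather than by a supersolution. Let $(\lm_{\max}(\be),\phi)\in\R\times C^3(\R^d)$ be the solution from Theorem \ref{pre1}, which satisfies the gradient estimate there and hence grows at most polynomially, and let $\mu$ be any smooth positive probability density on $\R^d$ with super-exponential decay. Multiplying (EP) by $\mu$, integrating, and integrating by parts once in the Laplacian term (boundary terms vanish by the growth/decay balance) yields
\[
\lm_{\max}(\be)=-\!\int_{\R^d}\!D\phi\cdot\big(b+D\log\mu\big)\,d\mu-\tfrac1m\!\int_{\R^d}\!|D\phi|^m\,d\mu+\be\!\int_{\R^d}\!V\,d\mu\ \le\ \int_{\R^d}\!\Big(\tfrac1{m^\ast}\big|b+D\log\mu\big|^{m^\ast}+\be V\Big)d\mu,
\]
the last step being the pointwise inequality $-q\cdot v-\tfrac1m|q|^m\le\tfrac1{m^\ast}|v|^{m^\ast}$. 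It remains to pick $\mu$ well: I would take $\mu\propto e^{-w(|x|)}$ with $w$ radial and $w'(r)=\rho r^\de+a r^{\de-1}-(\rho r_\be^\de+a r_\be^{\de-1})$ on $[r_\be/2,2r_\be]$ (extended to a $C^1$ function making $\mu$ a genuine density with $w(r)\gtrsim r^{\de+1}$ at infinity). Then $w$ has a nondegenerate minimum at $r_\be$, so $\mu$ concentrates on a shell of width $o(r_\be)$ about $|x|=r_\be$, and on $[r_\be/2,2r_\be]$ the vector $b+D\log\mu$ has \emph{constant} length $\rho r_\be^\de+a r_\be^{\de-1}$; therefore $\int\tfrac1{m^\ast}|b+D\log\mu|^{m^\ast}d\mu=\tfrac1{m^\ast}\rho^{m^\ast}r_\be^{\de m^\ast}(1+o(1))$ (the factor $(1+a/(\rho r_\be))^{m^\ast}\to1$, which is why $c_0$ does not see $a$) and $\be\int V\,d\mu=\be\,r_\be^{-\eta}(1+o(1))$, so the right-hand side equals $c_0\be^\theta(1+o(1))$ by the minimisation, giving $\limsup_{\be\to\infty}\be^{-\theta}\lm_{\max}(\be)\le c_0$.

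The step I expect to be most delicate is the control of the critical zone $|x|\sim r_\be$, where in both arguments the estimates are tight and one must check that the genuinely lower-order contributions — the Laplacian $\De\phi$ and the drift correction $a|x|^{\de-1}\phi'$ on the subsolution side, the shell width and the Gaussian-type tails of $\mu$ on the duality side — are indeed $o(\be^\theta)$ there; a secondary technical point is justifying the single integration by parts, for which the polynomial growth of $\phi$ (Theorem \ref{pre1}) against the super-exponential decay of $\mu$ is exactly what is needed.
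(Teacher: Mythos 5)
Your lower bound argument is essentially the paper's: the paper's Proposition~\ref{prop.lower} uses exactly the radial ansatz $\phi_0'(r)=-(\rho r^{\de})^{1/(m-1)}$ (written as $\phi_0(x)=-\rho^{m^\ast-1}(1+\tfrac{\de}{m-1})^{-1}|x|^{1+\de/(m-1)}$), reduces the exterior inequality to the same one-dimensional minimisation of $r\mapsto\tfrac{\rho^{m^\ast}}{m^\ast}r^{\de m^\ast}+\be r^{-\eta}$, and handles the lower-order terms ($-\De\phi_0$ and the $a|x|^{\de-1}\phi_0'$ correction) by an $\ep$-margin. Your bookkeeping differs slightly (you insert $\ep$ directly into the candidate eigenvalue, the paper deforms the minimand into $f_\ep$), but the content is the same.

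Your upper bound, however, is a genuinely different route, and a cleaner one. The paper (Proposition~\ref{de>0_upper}) tests the equation against $\zeta_y^{m^\ast}$ for a fixed compactly supported bump $\zeta$ translated to $|y|=\theta\be^{1/(\de m^\ast+\eta)}$, then applies Young's inequality \emph{separately} to $\rho|x|^{\de-1}x\cdot D\phi$, to $a|x|^{\de-2}x\cdot D\phi$, and to $D\zeta_y\cdot(\zeta_y^{m^\ast-1}D\phi)$, which forces the auxiliary parameter $l>m$, the constants $C_1(l),C_2(l)$, and a final double limit $\be\to\infty$ then $l\to m$. You instead test against a single global probability density $\mu$, integrate the Laplacian by parts once (justified by the gradient estimate of Theorem~\ref{pre1} versus the super-exponential decay of $\mu$; note also $\De\phi$ is polynomially bounded by the equation itself), and apply \emph{one} Young inequality with $q=D\phi$ and $v=b+D\log\mu$, which removes $D\phi$ entirely with no loss. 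The optimisation over $\mu$ then replaces the paper's optimisation over $\theta$; your choice of $w'$ so that $b+D\log\mu$ has constant modulus $\rho r_\be^{\de}+a r_\be^{\de-1}$ on a shell around $r_\be$ is the right idea and transparently explains why $a$ drops out of $c_0$. What this buys is conceptual clarity (no $l\to m$ limit, a single sharp Young step); what it costs is a Laplace-method argument to control the width of concentration of $\mu$ around $r_\be$ and the tails of $\int|b+D\log\mu|^{m^\ast}d\mu$ and $\be\int V\,d\mu$ outside $[r_\be/2,2r_\be]$ — these are routine but do need to be written out (e.g.\ $w''(r_\be)\sim\de\rho r_\be^{\de-1}$ gives a shell of width $\sim r_\be^{(1-\de)/2}=o(r_\be)$ since $\de>0$, and $w(r_\be/2)-w(r_\be)$ and $w(2r_\be)-w(r_\be)$ are both $\gtrsim r_\be^{\de+1}$, so the tail mass is super-exponentially small against polynomial weights). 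The paper's compactly supported bump avoids all of this at the price of the $l$-parameter and the extra $D\zeta_y$ term. Both approaches are correct; yours is a valid and arguably more elegant alternative for the upper bound, provided you flesh out the tail estimates.
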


The proof is divided into two parts; we derive lower and upper bounds separately. 
We first consider the lower bound. 

\begin{prop}\label{prop.lower}
Let (H1) hold with $\de>0$. Then,
\begin{equation}\label{lower1}
\liminf_{\be\to\infty}\be^{-\frac{\de m^\ast }{\de m^\ast +\eta}}\,\lm_{\max}(\be)\geq c_0.
\end{equation}
\end{prop}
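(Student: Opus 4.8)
The plan is to construct, for each large $\be$, an explicit subsolution $(\lm,\phi)$ of (EP) with $\lm$ close to $c_0\be^{\frac{\de m^\ast}{\de m^\ast+\eta}}$, which by the definition (\ref{lm_max}) of $\lm_{\max}(\be)$ immediately yields the lower bound. Since the dangerous region is the far field where $b(x)\approx\rho|x|^{\de-1}x$ and $V(x)=|x|^{-\eta}$, I would look for a radial ansatz $\phi(x)=\psi(|x|)$ that is, up to lower-order corrections, a power of $|x|$. Writing $r=|x|$ and plugging a radial function into $F[\phi]-\be V$, the leading terms in the far field are $b\cdot D\phi\approx\rho r^{\de}\psi'(r)$, $\frac1m|D\phi|^m\approx\frac1m|\psi'(r)|^m$, $\De\phi$ of lower order, and $-\be V=-\be r^{-\eta}$. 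Balancing $\rho r^{\de}\psi'(r)$ against $\be r^{-\eta}$ suggests $\psi'(r)\sim C\be\,r^{-\de-\eta}$, i.e. $\psi(r)\sim -\frac{C\be}{\de+\eta-1}r^{-(\de+\eta-1)}$ (with an appropriate logarithmic or affine modification in the borderline exponent, which does not affect the leading asymptotics). The constant $\lm$ then comes out as the residual constant term, and one optimizes the free constant $C$ (equivalently, chooses the scale at which the two competing costs balance) to make $\lm$ as large as possible; a short Lagrange/AM–GM-type computation should reproduce exactly the constant
$$
c_0=\left(\frac{\de m^\ast+\eta}{\de m^\ast}\right)\left(\frac{\rho^{m^\ast}\de}{\eta}\right)^{\frac{\eta}{\de m^\ast+\eta}}.
$$

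Concretely, the steps I would carry out are: (1) Fix the far-field radial ansatz $\phi(x)=-\gm\,|x|^{-s}$ for $|x|\ge R_0$, with exponents/constants $s>0$ and $\gm>0$ to be chosen as powers of $\be$, and compute $F[\phi]-\be V$ explicitly in terms of $r,s,\gm,\be$; identify which terms are leading and which are genuinely lower order as $\be\to\infty$ (here it is crucial that the natural length scale $r\sim\be^{1/(\de m^\ast+\eta)}$ goes to infinity, so that the Laplacian term and the $\al(x)=a|x|^{-1}$ correction are negligible — this is also why $c_0$ does not depend on $a$). (2) Choose $s$ and $\gm$ (as the right powers of $\be$) so that the two leading terms, coming from $b\cdot D\phi$ and from $\frac1m|D\phi|^m$, combine to produce a negative constant of size exactly $-\lm$ with $\lm=(c_0-\ep)\be^{\frac{\de m^\ast}{\de m^\ast+\eta}}$; the optimization over the remaining free parameter is where the algebraic form of $c_0$ is pinned down. (3) Extend $\phi$ to all of $\R^d$ by gluing it, inside $B_{R_0}$, to a fixed smooth function (e.g. a constant or a suitable quadratic) and check that the inequality $F[\phi]-\be V\le\lm$ persists on the bounded region: there $V\ge0$ is bounded below on $\overline{B}_R$ by (H0)/(H1), $\be V$ is large, and $F[\phi]$ is bounded, so the subsolution inequality is satisfied with room to spare for $\be$ large, provided the gluing is done so that $\phi$ remains $C^3$ (or first mollify and then verify the viscosity/classical inequality). (4) Since a subsolution with eigenvalue $(c_0-\ep)\be^{\frac{\de m^\ast}{\de m^\ast+\eta}}$ exists for every small $\ep>0$ and all large $\be$, conclude $\liminf_{\be\to\infty}\be^{-\frac{\de m^\ast}{\de m^\ast+\eta}}\lm_{\max}(\be)\ge c_0-\ep$, then let $\ep\to0$.

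The main obstacle is not the far-field balance computation — that is a routine if slightly delicate power-counting exercise — but rather making the single radial profile work \emph{globally and smoothly}: the simplest power ansatz $-\gm|x|^{-s}$ is singular at the origin, so one must interpolate across an annulus $\{R_0\le|x|\le R_1\}$ between the far-field profile and a regular inner function without creating a bad sign in $F[\phi]-\be V-\lm$ on the transition region. Two sub-issues to watch: the transition annulus should be placed at a radius that grows with $\be$ (so that $\be V$ there is still small and cannot be used for slack), which means the inner function's contribution to $|D\phi|^m$ and $\De\phi$ must be controlled uniformly in $\be$; and one must ensure the candidate is genuinely a subsolution in the required $C^3$ class, which may require first building a Lipschitz subsolution and then invoking the regularity/approximation statements behind Theorem \ref{pre1}. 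A clean way around the gluing difficulty is to choose the inner piece to be, say, a large negative constant so that across the annulus $\phi$ is monotone and the corrector terms have a favorable sign; I expect that with the correct placement of the annulus all error terms are $o(\be^{\frac{\de m^\ast}{\de m^\ast+\eta}})$ and the argument closes.
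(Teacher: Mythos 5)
Your proposal identifies the right general strategy (build an explicit subsolution and compare with the definition of $\lm_{\max}$), and you correctly note that the $a$-term and the Laplacian should be lower order and that $V>0$ on a neighbourhood of $\overline{B}_R$ is what lets you absorb the bounded region. However, the core of your argument — the choice of ansatz and the balance that determines $c_0$ — is wrong, and the approach as stated would not produce a positive $\lm$.

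Take your proposed far-field profile $\phi(x)=-\gm|x|^{-s}$ with $\gm,s>0$. This is \emph{increasing} in $|x|$, so $D\phi$ points outward and $b\cdot D\phi=\rho\gm s|x|^{\de-s-1}>0$. The subsolution inequality is $\lm\leq\De\phi-b\cdot D\phi-\frac1m|D\phi|^m+\be V$, and for your profile every term on the right tends to $0$ as $|x|\to\infty$ (the first and third are lower order, the second and fourth cancel under your proposed balance $\rho r^\de\psi'\approx\be r^{-\eta}$). Since the inequality must hold for every $x$, you would be forced to take $\lm\leq 0$, which cannot yield (\ref{lower1}). More fundamentally, the balance you propose ($b\cdot D\phi$ against $\be V$) is not where the constant $c_0$ comes from. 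The correct mechanism is to choose $\phi_0$ \emph{decreasing} in $|x|$, with $|D\phi_0(x)|$ tuned pointwise to maximize $-b(x)\cdot D\phi_0(x)-\frac1m|D\phi_0(x)|^m$; by Young's inequality this maximum is $\frac1{m^\ast}\rho^{m^\ast}|x|^{\de m^\ast}$, attained at $|D\phi_0(x)|=\rho^{m^\ast-1}|x|^{\de/(m-1)}$. That pins the ansatz to $\phi_0(x)=-\rho^{m^\ast-1}\bigl(1+\tfrac{\de}{m-1}\bigr)^{-1}|x|^{1+\de/(m-1)}$, which is exactly what the paper uses. Then $-F[\phi_0]+\be V\approx\frac{\rho^{m^\ast}}{m^\ast}|x|^{\de m^\ast}+\be|x|^{-\eta}$, and minimizing this over $|x|$ gives $c_0\be^{\de m^\ast/(\de m^\ast+\eta)}$; the optimal radius $r_0\sim\be^{1/(\de m^\ast+\eta)}$ is where the minimum lives, not where some cancellation occurs.

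A welcome side effect of the correct ansatz is that the gluing issue you devote the last paragraph to largely disappears: $\phi_0$ can be taken $\be$-independent, $C^3$ on all of $\R^d$, with the power profile imposed only on $\{|x|\geq R\}$ for the \emph{fixed} $R$ of (H0), and the bounded region is handled once and for all by taking $\be$ large (using $\inf_{B_R}V>0$). You do not need a $\be$-dependent transition annulus, nor a viscosity/approximation detour. The only genuine $\ep$-bookkeeping needed is to absorb the lower-order $\De\phi_0$ and $a$-terms into an $\ep|x|^{\de m^\ast}$ for $|x|$ beyond some $R_\ep$, which is where the $c_\ep\to c_0$ limiting step in the paper comes from.
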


\begin{proof}
Let $\phi_0\in C^3(\R^d)$ be any function such that $\phi_0(x)=-\rho^{m^\ast-1}(1+\frac{\de}{m-1})^{-1}|x|^{1+\frac{\de}{m-1}}$ for all $x\not\in B_R$. Then, by direct computations, we observe that, for any $x\not\in B_R$, 
\begin{equation*}
F[\phi_0](x)
=\rho^{m^\ast-1}\Big(d+\frac{\de}{m-1}-1\Big)|x|^{\frac{\de}{m-1}-1}-\frac{\rho^{m^\ast}}{m^\ast}|x|^{\de m^\ast}-\rho^{m^\ast-1}a|x|^{\de m^\ast-1}.
\end{equation*}
Since $\de/(m-1)<\de m^\ast$, we have
\begin{align*}
F[\phi_0](x)-\be V(x)
&\leq C|x|^{\de m^\ast-1}-\frac{\rho^{m^\ast}}{m^\ast}|x|^{\de m^\ast }-\be|x|^{-\eta},\qquad x\not\in B_R,
\end{align*}
where $C>0$ is a constant not depending on $R$ and $\be$.

Set $f(r):=(\rho^{m^\ast}/m^\ast)r^{\de m^\ast }+\be r^{-\eta}$ for $r\geq R$. Note that $f$ attains its minimum at $r=\max\{R,r_0\}$, where
\begin{equation*}
 r_0:=\left(\frac{\eta\be}{\rho^{m^\ast}\de}\right)^{\frac{1}{\de m^\ast +\eta}}.
\end{equation*}
Thus, if we take $\be$ so large that $\be> \rho^{m^\ast}\de R^{\de m^\ast+\eta}/\eta$, then 
\begin{align*}
\min_{r\geq R}f(r)=f(r_0)&=\frac{\rho^{m^\ast}}{m^\ast}\Big(\frac{\eta\be}{\rho^{m^\ast}\de}\Big)^{\frac{\de m^\ast }{\de m^\ast +\eta}}+\be\Big(\frac{\eta\be}{\rho^{m^\ast}\de}\Big)^{-\frac{\eta}{\de m^\ast +\eta}}=c_0\be^{\frac{\de m^\ast }{\de m^\ast +\eta}}.
\end{align*}

Now, we set $f_\ep(r):=(\rho^{m^\ast}/m^\ast-\ep)r^{\de m^\ast }+\be r^{-\eta}$ for $r\geq R$ and $\ep>0$. Then, similarly as above, one can verify that, for each $\ep>0$ and $\be$ sufficiently large, $f_\ep $ attains its minimum at some $r=r_\ep$, that the minimum of $f_\ep$ has the form $f_\ep(r_\ep)=c_\ep\be^{\frac{\de m^\ast }{\de m^\ast +\eta}}$ for some $c_\ep>0$ not depending on $\be$, and that $r_\ep\to r_0$ and $c_\ep\to c_0$ as $\ep\to0$.
Taking these into account, we observe that, for any $\be$ sufficiently large,  
\begin{align*}
F[\phi_0](x)-\be V(x)
\leq C|x|^{\de m^\ast-1}-\ep|x|^{\de m^\ast }-f(r_\ep),\quad x\not\in B_R.
\end{align*}
Choosing $R_\ep\geq R$ so large that $C|x|^{\de m^\ast-1}\leq \ep|x|^{\de m^\ast }$ for all $x\not\in B_{ R_\ep}$, we obtain
\begin{align*}
F[\phi_0](x)-\be V(x)
\leq -f(r_\ep),\quad x\not\in B_{R_\ep}.
\end{align*}
Furthermore, since $\de m^\ast/(\de m^\ast+\eta)<1$, one can find a $\be_0$ such that, for any $\be\geq \be_0$.
\begin{equation*}
\be>\frac{f(r_\ep)+\sup_{B_{R_\ep}}F[\phi_0]}{\inf_{B_{R_\ep}}V}.
\end{equation*}
This implies that
\begin{align*}
F[\phi_0](x)-\be V(x)
\leq -f(r_\ep),\quad x\in B_{R_\ep}.
\end{align*}
Thus, the pair $(f(r_\ep),\phi_0)$ is a subsolution of (EP). By the definition of $\lm_{\max}(\be)$, we conclude that $\lm_{\max}(\be)\geq f(r_\ep)=c_\ep\be^{\frac{\de m^\ast }{\de m^\ast +\eta}}$ for any $\be\geq \be_0$. In particular,
\begin{equation*}
\liminf_{\be\to\infty}\be^{-\frac{\de m^\ast }{\de m^\ast +\eta}}\lm_{\max}(\be)\geq c_\ep.
\end{equation*}
Letting $\ep\to0$ and noting that $c_\ep\to c_0$ as $\ep\to0$, we obtain (\ref{lower1}). 
\end{proof}

We next give the upper bound with the same constant $c_0$.  

\begin{prop}\label{de>0_upper}
Let (H1) hold with $\de>0$. Then,
\begin{equation}\label{upper1}
\limsup_{\be\to\infty}\be^{-\frac{\de m^\ast }{\de m^\ast +\eta}}\,\lm_{\max}(\be)\leq c_0.
\end{equation}
\end{prop}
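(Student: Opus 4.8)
The plan is to establish the upper bound by constructing, for each sufficiently large $\be$, a suitable supersolution-type test function against which any subsolution $(\lm,\phi)$ of (EP) must be compared, thereby forcing $\lm\le (c_0+o(1))\be^{\frac{\de m^\ast}{\de m^\ast+\eta}}$. The natural starting point is the gradient estimate from Theorem \ref{pre1}: since (for $\de>0$) $\overline{\lm}_{\max}=+\infty$, eventually $\lm_{\max}(\be)>0$, so $|D\phi(x)|\le K(1+|x|^{\frac{\de}{m-1}})$. This linear-in-$|x|^{\de/(m-1)}$ control on the gradient means the genuinely superlinear term $\frac1m|D\phi|^m$ is at most $O(|x|^{\de m^\ast})$ and, more importantly, lets me localize the analysis to an annulus or a ball of radius comparable to $r_0=(\eta\be/\rho^{m^\ast}\de)^{1/(\de m^\ast+\eta)}$.

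The key mechanism I would exploit is the radial ODE satisfied (as a differential inequality) by a suitable one-dimensional reduction. Evaluating the subsolution inequality $F[\phi](x)\le \be V(x)$ near a well-chosen radius, or integrating it against a radial weight over a ball $B_r$, should produce: after using the inward drift $b(x)\cdot D\phi(x)\approx \rho|x|^\de\,\phi_r$ and the Young-type inequality $\rho|x|^\de\,\phi_r+\frac1m|\phi_r|^m\ge -\frac{\rho^{m^\ast}}{m^\ast}|x|^{\de m^\ast}$ (equality when $\phi_r=-\rho^{m^\ast-1}|x|^{\de/(m-1)}$, which is exactly the choice of $\phi_0$ in the lower-bound proof), the bound
\begin{equation*}
\lm\le -\De\phi(x)+\rho|x|^\de\phi_r(x)+\tfrac1m|\phi_r|^m+\text{l.o.t.}-\be|x|^{-\eta}\le \frac{\rho^{m^\ast}}{m^\ast}|x|^{\de m^\ast}+\be|x|^{-\eta}+\text{l.o.t.}=f(|x|)+\text{l.o.t.}
\end{equation*}
Optimizing the right-hand side over the radius gives $\min_{r\ge R} f(r)=c_0\be^{\frac{\de m^\ast}{\de m^\ast+\eta}}$, which is precisely $c_0$. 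The lower-order terms—the Laplacian $-\De\phi$ and the $a|x|^{\de m^\ast-1}$ piece from $\al(x)=a|x|^{-1}$—are controlled by the gradient estimate: they are $O(|x|^{\frac{\de}{m-1}-1})$ and $O(|x|^{\de m^\ast-1})$ respectively, hence negligible compared to $|x|^{\de m^\ast}$ at scale $r_0\to\infty$. A clean way to make the pointwise heuristic rigorous is to test the inequality against the radial density $r^{d-1}$ times a cutoff, or to pass to the radial maximum of $\phi$ restored to an ODE comparison; I expect to integrate $F[\phi]-\be V\le 0$ over $B_r\setminus B_R$ with a weight, or to average over the sphere $\partial B_r$ of the optimal radius $r\sim r_0$ and invoke the divergence theorem to kill the $-\De\phi$ contribution.

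The main obstacle I anticipate is \emph{not} the choice of optimal radius (that is forced by the lower bound) but rather controlling the error terms uniformly when $\phi$ is only assumed to be a subsolution with no a priori upper bound other than the gradient estimate: in particular handling the Laplacian term $-\De\phi$, since the gradient estimate does not directly bound $\De\phi$. The standard remedy is to work with the spherical average $\psi(r):=\fint_{\partial B_r}\phi\,dS$, for which $\De\phi$ averages to $\psi''+\frac{d-1}{r}\psi'$, a pure derivative in $r$ that integrates to a boundary term, while convexity of $\xi\mapsto\frac1m|\xi|^m$ and Jensen's inequality convert $\fint\frac1m|D\phi|^m$ into something bounded below by $\frac1m|\psi'(r)|^m$ with the correct sign. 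After this reduction the problem becomes a one-dimensional differential inequality for $\psi$ to which the Young-inequality optimization applies, and integrating from $R$ to $2r_0$ (say) against $r^{d-1}$, then dividing by $\int_R^{2r_0} r^{d-1}\,dr$, yields $\lm\le \max_{R\le r\le 2r_0} f(r) + o(\be^{\frac{\de m^\ast}{\de m^\ast+\eta}}) = (c_0+o(1))\be^{\frac{\de m^\ast}{\de m^\ast+\eta}}$, giving (\ref{upper1}). A secondary technical point is ensuring the behavior of $V$ and $b$ inside $B_R$ (where neither is controlled by the (H1) formulas) contributes only an $O(1)$ or $O(\be)$-times-bounded-region error that is absorbed, exactly as in the last steps of the lower-bound proof.
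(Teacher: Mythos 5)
Your outline is a genuinely different route from the paper's. You propose to replace $\phi$ by its spherical average $\psi(r)$, so that $\De\phi$ collapses to $\psi''+\tfrac{d-1}{r}\psi'$, use Jensen to push the Hamiltonian below $\tfrac1m|\psi'|^m$, and then control the leftover $\psi'',\psi'$ terms via the gradient bound of Theorem~\ref{pre1} after integrating against $r^{d-1}$. The paper instead multiplies (EP) by $\zeta_y^{m^\ast}$, with $\zeta_y$ a fixed bump translated to $|y|=\theta\be^{1/(\de m^\ast+\eta)}$, integrates by parts, and absorbs \emph{every} $D\phi$-term (including the remainder $m^\ast\zeta_y^{m^\ast-1}D\zeta_y\cdot D\phi$ and the $a|x|^{\de-2}x\cdot D\phi$-term) into the surviving portion $(\tfrac1m-\tfrac1l)\zeta_y^{m^\ast}|D\phi|^m$ by Young's inequality with a parameter $l>m$; the exponent $m^\ast$ on the test function is exactly what makes this closing-up possible, and no a priori gradient estimate is invoked at all.

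The genuine gap is in your concluding step. Integrating against $r^{d-1}$ over $[R,2r_0]$ and dividing by $\int_R^{2r_0}r^{d-1}\,dr$ produces a weighted \emph{average} of $f$ over the whole interval, not its infimum. That average exceeds $f(r_0)=c_0\be^{\de m^\ast/(\de m^\ast+\eta)}$ by a $\be$-independent multiplicative constant, and when $\eta>d$ the contribution of $\be r^{-\eta}$ near $r=R$ dominates and the average is of order $\be^{(\de m^\ast+\eta-d)/(\de m^\ast+\eta)}$, a strictly larger power than the one you need. Your intermediate bound ``$\le\max_{[R,2r_0]}f$'' is in fact vacuous: $f$ is convex, so $\max_{[R,2r_0]}f=f(R)\sim\be R^{-\eta}$, which is $O(\be)$. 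To recover the sharp constant you must localize the test weight near the minimizer: integrate against $r^{d-1}$ times a cutoff supported in a \emph{thin} annulus $[(1-\ep)r_0,(1+\ep)r_0]$, whose weighted average of $f$ tends to $f(r_0)$ as $\ep\to0$, while for each fixed $\ep$ the $\psi''$ and $\psi'$ error terms remain $o(\be^{\de m^\ast/(\de m^\ast+\eta)})$ as $\be\to\infty$; take $\be\to\infty$ first, then $\ep\to0$. This thin-annulus localization is exactly what the paper encodes by centering $\zeta_y$ at $|y|=\theta\be^{1/(\de m^\ast+\eta)}$ and optimizing over $\theta$.

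A secondary concern is that your control of the $\psi''$ boundary terms requires the constant $K$ in Theorem~\ref{pre1} to be uniform in $\be$, a nontrivial input that the paper's argument never needs. If you wish to keep the radial route, the cleaner move is to import the paper's trick into one dimension: multiply the radial inequality by $\chi(r)^{m^\ast}$ for a thin-annulus cutoff $\chi$, integrate by parts in $r$, and absorb the $\chi^{m^\ast-1}\chi'\psi'$ remainder into $\chi^{m^\ast}|\psi'|^m$ by Young, eliminating the gradient estimate entirely. (Minor typos in your heuristic display: the chain should read $\lm=\be V+\De\phi-b\cdot D\phi-\tfrac1m|D\phi|^m\le\tfrac{\rho^{m^\ast}}{m^\ast}|x|^{\de m^\ast}+\be|x|^{-\eta}+\text{l.o.t.}$; the signs of $\De\phi$ and $\be V$ in the middle expression are flipped, though your final $f(|x|)$ is the right object.)
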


\begin{proof}
Let $(\lm_{\max}(\be),\phi)\in \R\times C^3(\R^d)$ be a  solution of (EP). Then
\begin{equation}\label{phi_sub}
\lm_{\max}(\be)-\De\phi(x)+b(x)\cdot D\phi(x)+\frac1m|D\phi(x)|^m-\be V(x)= 0
\end{equation}
for all $x\in\R^d$.
Let $\zeta\in C_0^\infty(\R^d)$ be a test function such that $\zeta\geq 0$ in $\R^d$, $\int_{\R^d}\zeta^{m^\ast}dx=1$, and $\supp \zeta\subset B_1$. 
We choose an arbitrary $y\not\in B_{ R+1}$ and set $\zeta_y:=\zeta(\,\cdot\,-y)$.  Then, multiplying both sides of (\ref{phi_sub}) by $\zeta_y^{m^\ast}$, using the integration by parts formula, and noting the fact that $(\supp \zeta_y)\cap B_R=\emptyset$, we have
\begin{align*}
\lm_{\max}(\be)
&= -m^\ast\int_{\R^d}\zeta_y^{m^\ast-1}D\zeta_y\cdot D\phi\, dx
 -\int_{\R^d}\zeta_y^{m^\ast}(b\cdot D\phi+\frac1m|D\phi|^{m}-\be V)\, dx\\
 &= -m^\ast\int_{\R^d}D\zeta_y\cdot (\zeta_y^{m^\ast-1}D\phi)\, dx-\frac1m\int_{\R^d}\zeta_y^{m^\ast}|D\phi|^{m}\,dx\\
&\qquad -\int_{\R^d}\zeta_y^{m^\ast}(\rho|x|^{\de-1}x\cdot D\phi+a|x|^{\de-2}x\cdot D\phi-\be |x|^{-\eta})\, dx.
\end{align*}
We now recall Young's inequality of the form 
\begin{equation*}
p\cdot q\leq \frac1m|cp|^m+\frac{1}{m^\ast}|c^{-1}q|^{m^\ast},\qquad p,q\in\R^d,\quad c>0,
\end{equation*}
and apply it to $-(\rho|x|^{\de-1}x)\cdot D\phi$.
Then, for any $l>m$, there exists some $C_1(l)>0$ with $C_1(l)\to 1/m^\ast$ as $l\to m$ such that

\begin{align*}
\lm_{\max}(\be)
&\leq  -m^\ast\int_{\R^d}D\zeta_y\cdot (\zeta_y^{m^\ast-1}D\phi)\, dx-\Big(\frac1m-\frac1l\Big)\int_{\R^d}\zeta_y^{m^\ast}|D\phi|^{m}\,dx\\
&\qquad +\int_{\R^d}\zeta_y^{m^\ast}(C_1(l)\rho^{m^\ast}|x|^{\de m^\ast})\,dx
-\int_{\R^d}\zeta_y^{m^\ast}(a|x|^{\de-2}x\cdot D\phi-\be |x|^{-\eta})\, dx.
\end{align*}

Furthermore, applying Young's inequality to $D\zeta_y\cdot (\zeta_y^{m^\ast-1}D\phi)$ and $(a|x|^{\de-2}x)\cdot D\phi$, we can see that, for the above $l>m$, there exists another constant $C_2(l)>0$ with $C_2(l)\to \infty$ as $l\to m$ such that
\begin{align*}
\lm_{\max}(\be)
&\leq C_2(l)\int_{\R^d}(|D\zeta_y|^{m^\ast}+\zeta_y^{m^\ast}|a|^{m^\ast}|x|^{(\de-1) m^\ast})dx\\
&\qquad +\int_{\R^d}\zeta_y^{m^\ast}(C_1(l)\rho^{m^\ast}|x|^{\de m^\ast}+\be |x|^{-\eta})\,dx\\
&= C_2(l)\int_{\R^d}(|D\zeta(x)|^{m^\ast}+\zeta(x)^{m^\ast}|a|^{m^\ast}|x+y|^{(\de-1) m^\ast})dx\\
&\qquad +\int_{\R^d}\zeta(x)^{m^\ast}(C_1(l)\rho^{m^\ast}|x+y|^{\de m^\ast}+\be |x+y|^{-\eta})\,dx.
\end{align*}
Since $|x|/|y|< 1/R$ and $|x+y|>R$ for any  $x\in \supp \zeta$, we observe that, for any $\gm\in\R$ and $x\in\supp\zeta$, 
\begin{equation*}
|x+y|^\gm\leq |y|^\gm\Big(1+\frac{|x|}{|y|}\Big)^\gm
\leq |y|^\gm\Big(1+C_0\frac{|x|}{|y|}\Big)=|y|^\gm+C_0|x|\,|y|^{\gm-1},
\end{equation*}
where $C_0>0$ is a constant depending only on $R$ and $\gm$.
Noting this and the fact that $\int_{\R^d}|x|\zeta(x)^{m^\ast}dx\leq \int_{\R^d}\zeta(x)^{m^\ast}dx=1$, we have 
\begin{align*}
\lm_{\max}(\be)
&\leq C_2(l)\int_{\R^d}|D\zeta|^{m^\ast}dx+C_2(l)|a|^{m^\ast}|y|^{(\de-1)m^\ast}+C_1(l)\rho^{m^\ast}|y|^{\de m^\ast }+\be |y|^{-\eta}\\
&\quad+C(C_2(l)|a|^{m^\ast}|y|^{(\de-1)m^\ast-1}+C_1(l)\rho^{m^\ast}|y|^{\de m^\ast -1}+\be|y|^{-\eta-1})
\end{align*}
for some $C>0$. In what follows, $C>0$ denotes various constants not depending on $\be$. 

We now set $|y|=\theta\be^{\frac{1}{\de m^\ast +\eta}}$ for a large $\be$, where $\theta>0$ will be optimized later. Then, plugging this into the above inequality, we obtain an estimate of the form
\begin{align*}
\lm_{\max}(\be)&\leq C(1+\be^{\frac{\de m^\ast -1}{\de m^\ast +\eta}})+ (C_1(l)\rho^{m^\ast}\theta^{\de m^\ast }+\theta^{-\eta})\be^{\frac{\de m^\ast }{\de m^\ast +\eta}}.
\end{align*}
Multiplying both sides by $\be^{-\frac{\de m^\ast }{\de m^\ast +\eta}}$ and letting $\be\to\infty$, we have
\begin{align*}
\limsup_{\be\to\infty}\be^{-\frac{\de m^\ast }{\de m^\ast +\eta}}\,\lm_{\max}(\be)\leq C_1(l)\rho^{m^\ast}\theta^{\de m^\ast }+\theta^{-\eta}.
\end{align*}
Sending $l\to m$ and noting that $C_1(l)\to1/m^\ast$ as $l\to m$, we conclude that
\begin{align*}
\limsup_{\be\to\infty}\be^{-\frac{\de m^\ast }{\de m^\ast +\eta}}\,\lm_{\max}(\be)&\leq \frac{\rho^{m^\ast}}{m^\ast}\,\theta^{\de m^\ast }+\theta^{-\eta}=:g(\theta).
\end{align*}
Since $g(\theta)$ attains its global minimum at $\theta_0:=(\de\rho^{m^\ast}/\eta)^{-\frac{1}{\de m^\ast +\eta}}$ and 
\begin{equation*}
g(\theta_0)=\frac{\rho^{m^\ast}}{m^\ast}\Big(\frac{\rho^{m^\ast}\de}{\eta}\Big)^{-\frac{\de m^\ast}{\de m^\ast+\eta}}+\Big(\frac{\rho^{m^\ast}\de}{\eta}\Big)^{\frac{\eta}{\de m^\ast+\eta}}=c_0,
\end{equation*}
we obtain the desired estimate (\ref{upper1}).
\end{proof}

Theorem \ref{main1} is a direct consequence of the above two propositions. 
We notice that the strict positivity assumption of $V$ in (H1) is crucial to the
proof of Theorem \ref{main1}. However, by a careful reading of the proof of
Proposition \ref{de>0_upper}, one can verify the following theorem under the
less restrictive hypothesis (H0):
\begin{thm}
Let (H0) hold with $\de>0$.  Assume that $\al(x)=a|x|^{-1}$ outside $B_R$ for some $a\in\R$ and $\supp V\subset B_{R'}$ for some $R'> R$. Then, $\displaystyle \overline{\lm}_{\max}:=\lim_{\be\to\infty}\lm_{\max}(\be)<\infty$. 
\end{thm}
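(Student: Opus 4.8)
The statement asserts that when the potential is compactly supported (inside $B_{R'}$) and the drift still has the precise outer form $b(x)=(\rho+a|x|^{-1})|x|^{\de-1}x$, the spectral function stays bounded as $\be\to\infty$. The idea is to re-run the upper-bound argument of Proposition \ref{de>0_upper}, but now exploit the fact that $V$ vanishes outside $B_{R'}$: instead of balancing the growth term $\rho^{m^\ast}|y|^{\de m^\ast}$ against a positive $\be|y|^{-\eta}$, we will simply fix the localization point $y$ at a single radius \emph{outside} $\supp V$, so that the $\be V$ contribution disappears entirely and what remains is a $\be$-independent bound.

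First I would take a solution $(\lm_{\max}(\be),\phi)\in\R\times C^3(\R^d)$ of (EP) provided by Theorem \ref{pre1}. Then I would pick a fixed test function $\zeta\in C_0^\infty(\R^d)$ with $\zeta\ge0$, $\int\zeta^{m^\ast}=1$ and $\supp\zeta\subset B_1$, and a fixed point $y$ with $|y|>R'+1$ (so that $\supp\zeta_y\cap B_{R'}=\emptyset$, hence $\zeta_y^{m^\ast}V\equiv 0$, and also $\supp\zeta_y\cap B_R=\emptyset$ so that $b$ has its radial form on $\supp\zeta_y$). Multiplying (EP) by $\zeta_y^{m^\ast}$, integrating by parts, and applying Young's inequality exactly as in the proof of Proposition \ref{de>0_upper} — to $-(\rho|x|^{\de-1}x)\cdot D\phi$, to $D\zeta_y\cdot(\zeta_y^{m^\ast-1}D\phi)$, and to $(a|x|^{\de-2}x)\cdot D\phi$ — gives, with the term $\be\int\zeta_y^{m^\ast}|x|^{-\eta}\,dx=0$ now absent,
\begin{equation*}
\lm_{\max}(\be)\leq C_2(l)\int_{\R^d}\bigl(|D\zeta|^{m^\ast}+\zeta^{m^\ast}|a|^{m^\ast}|x+y|^{(\de-1)m^\ast}\bigr)\,dx+C_1(l)\rho^{m^\ast}\int_{\R^d}\zeta^{m^\ast}|x+y|^{\de m^\ast}\,dx,
\end{equation*}
for any $l>m$, where $C_1(l),C_2(l)$ are the same constants as before. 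The right-hand side depends only on $l$, $y$, $a$, $\rho$, $\de$, $m$, $d$, $R'$ — but not on $\be$. Fixing any $l>m$ and the chosen $y$, this yields $\lm_{\max}(\be)\leq C$ for all $\be$ large enough, where $C$ is independent of $\be$. Since $\be\mapsto\lm_{\max}(\be)$ is non-decreasing (Theorem \ref{pre2}), the bound holds for all $\be\ge0$, and monotonicity gives existence of the finite limit $\overline{\lm}_{\max}=\lim_{\be\to\infty}\lm_{\max}(\be)<\infty$.

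There is essentially no serious obstacle here — the proof is a strict simplification of Proposition \ref{de>0_upper}, since we no longer need to optimize over the radius $\theta$ of the localization point: any single admissible $y$ outside $\supp V$ works. The only points to check carefully are that (i) $\supp\zeta_y$ stays outside both $B_R$ and $B_{R'}$, which only requires $|y|>R'+1\ge R+1$; (ii) the integration-by-parts manipulation and the three applications of Young's inequality are valid verbatim, which they are because $\phi\in C^3$ and $\zeta_y$ is compactly supported away from the origin where $|x|^{\de-1}$, $|x|^{\de-2}$ are smooth; and (iii) the Theorem \ref{pre2} monotonicity lets us pass from "$\lm_{\max}(\be)\le C$ for large $\be$" to a bound for all $\be$, so that the (already known to exist) limit $\overline{\lm}_{\max}$ is finite rather than $+\infty$. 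Thus the only mild subtlety is bookkeeping: making sure the constant $C$ genuinely carries no $\be$-dependence, which is transparent once $y$ and $l$ are frozen.
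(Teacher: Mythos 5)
Your proof is correct, and it matches what the paper intends: re-run the localization argument of Proposition \ref{de>0_upper} with the test function centered at a \emph{fixed} point $y$ outside $\supp V$, so that the $\be\int\zeta_y^{m^\ast}V\,dx$ term vanishes identically and the resulting bound carries no $\be$ dependence. The paper itself only offers the terse remark that ``it is enough that $V(x)\leq c|x|^{-\eta}$ outside $B_R$,'' which as stated would still leave a $\be|y|^{-\eta}$ term and does not by itself yield a $\be$-uniform bound unless one also freezes $y$ outside the support as you do; your version makes the mechanism explicit and is the cleaner way to fill in that sentence.
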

Indeed, in order to get this upper bound it is enough that $V(x)\leq
    c|x|^{-\eta}$ outside $B_R$, which is of course the case if $V$ is compactly
    supported and satisfies (H0).
  
\section{The shape of $\lm_{\max}(\be)$ for $\de=0$} 
This section is concerned with the moderate drift case ($\de=0$). We first give an upper bound of $\lm_{\max}(\be)$. 
\begin{prop}\label{de=0}
Let (H0) hold with $\de=0$. Then $\lm_{\max}(\be)\leq \rho^{m^\ast}/m^\ast$ for any $\be\geq 0$.
\end{prop}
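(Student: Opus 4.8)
The plan is to exhibit, for every $\be\ge0$, a subsolution $(\lm,\phi)$ of (EP) with $\lm$ arbitrarily close to $\rho^{m^\ast}/m^\ast$ from above being impossible; more precisely I want to show that no subsolution can have $\lm>\rho^{m^\ast}/m^\ast$. Equivalently, I will take a genuine solution $(\lm_{\max}(\be),\phi)\in\R\times C^3(\R^d)$ (which exists by Theorem~\ref{pre1}) and test the equation against a suitable compactly supported weight localized far from the origin, exactly as in the proof of Proposition~\ref{de>0_upper}, specializing to $\de=0$. Since $V$ vanishes at infinity under (H0), the potential term will be negligible in that region, and the drift term $b(x)\cdot D\phi = (\rho+\al(x))|x|^{-1}x\cdot D\phi$ has leading coefficient $\rho$; combining with the $\frac1m|D\phi|^m$ term via Young's inequality produces precisely the bound $\rho^{m^\ast}/m^\ast$ in the limit.

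Concretely, first I would fix a test function $\zeta\in C_0^\infty(\R^d)$ with $\zeta\ge0$, $\supp\zeta\subset B_1$, and $\int\zeta^{m^\ast}\,dx=1$, and set $\zeta_y:=\zeta(\cdot-y)$ for $|y|$ large (so that $\supp\zeta_y\cap B_R=\emptyset$). Multiplying (EP) with $\lm=\lm_{\max}(\be)$ by $\zeta_y^{m^\ast}$, integrating, and integrating by parts on the Laplacian term gives an identity of the form
\begin{equation*}
\lm_{\max}(\be)= -m^\ast\!\int D\zeta_y\cdot(\zeta_y^{m^\ast-1}D\phi)\,dx-\frac1m\!\int\zeta_y^{m^\ast}|D\phi|^m\,dx-\int\zeta_y^{m^\ast}\bigl((\rho+\al)|x|^{-1}x\cdot D\phi-\be V\bigr)\,dx.
\end{equation*}
Then I would apply Young's inequality $p\cdot q\le\frac1m|cp|^m+\frac1{m^\ast}|c^{-1}q|^{m^\ast}$ to $-(\rho|x|^{-1}x)\cdot D\phi$ with $c$ chosen so that the $|D\phi|^m$ contribution is absorbed (at the cost of a factor $C_1(l)\to1/m^\ast$, using an auxiliary exponent $l>m$ as in Proposition~\ref{de>0_upper}), and treat the lower-order pieces $D\zeta_y\cdot(\zeta_y^{m^\ast-1}D\phi)$ and $\al(x)|x|^{-1}x\cdot D\phi$ by a further Young inequality with a small constant, so that the remaining $|D\phi|^m$ mass is controlled. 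On $\supp\zeta_y$ we have $|x|^{-1}x\cdot D\phi$ bounded in modulus by $|D\phi|$ and $|\al(x)|\to0$ as $|y|\to\infty$; also $\sup_{\supp\zeta_y}\be V\to0$ and $\rho^{m^\ast}|x|^0=\rho^{m^\ast}$ contributes exactly $\rho^{m^\ast}C_1(l)$ after integrating against $\zeta_y^{m^\ast}$. Collecting terms yields, for each $l>m$,
\begin{equation*}
\lm_{\max}(\be)\le C_1(l)\rho^{m^\ast}+o_{|y|\to\infty}(1),
\end{equation*}
and letting $|y|\to\infty$ then $l\to m$ gives $\lm_{\max}(\be)\le\rho^{m^\ast}/m^\ast$, uniformly in $\be$ since every error term above is independent of $\be$ once $|y|$ is large (here one uses $\be V(x)\to0$ as $|x|\to\infty$, not any quantitative rate).

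The main obstacle — really the only delicate point — is bookkeeping the constants in the repeated Young inequalities so that the coefficient in front of $|D\phi|^m$ stays nonnegative (hence that term can be dropped) while the coefficient in front of $\rho^{m^\ast}$ converges to $1/m^\ast$; this is the standard device of introducing the exponent $l$ slightly above $m$ with $C_1(l)\to1/m^\ast$ and $C_2(l)\to\infty$, which one must arrange before sending $|y|\to\infty$ so that the $C_2(l)$-weighted terms (all of which carry a vanishing factor $|\al(y)|^{m^\ast}$ or $|D\zeta|^{m^\ast}$-type mass of fixed size that is still $o(1)$ after dividing — wait, more carefully: the $|D\zeta|$ term does not vanish, so one keeps it and it disappears only in the $l\to m$... no). Let me restate: the $\int|D\zeta_y|^{m^\ast}$ term is a fixed finite constant times $C_2(l)$, so one must send $|y|\to\infty$ \emph{first} to kill the $\al$ and $V$ contributions with $l$ fixed, obtaining $\lm_{\max}(\be)\le C_1(l)\rho^{m^\ast}+C_2(l)\cdot C_\zeta$; this does not close. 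The correct fix, and the actual subtlety, is to also rescale the bump: replace $\zeta$ by $\zeta^{(\nu)}(x):=\nu^{-d/m^\ast}\zeta(x/\nu)$ or equivalently stretch $\supp\zeta$ to a ball of radius $\nu\to\infty$ (with the normalization $\int(\zeta^{(\nu)})^{m^\ast}=1$ preserved), which sends $\int|D\zeta^{(\nu)}|^{m^\ast}\to0$ while keeping $\supp\zeta^{(\nu)}_y$ far from $B_R$ for $|y|$ large; then one sends $\nu\to\infty$ and $|y|/\nu\to\infty$ appropriately, then $l\to m$. I would handle this by choosing $|y|=|y|(\nu)$ growing fast enough that $\sup_{\supp\zeta^{(\nu)}_y}(|\al|+\be V)\to0$, which is possible since both $\al$ and $V$ vanish at infinity. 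With that three-parameter limit in place, every error term is genuinely $o(1)$ and the bound $\lm_{\max}(\be)\le\rho^{m^\ast}/m^\ast$ follows.
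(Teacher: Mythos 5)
Your proposal is correct and, after the mid-stream self-correction, lands on essentially the same mechanism the paper uses: test the equation for a genuine solution against a nonnegative weight $\zeta^{m^\ast}$, apply Young's inequality with an auxiliary exponent $l>m$ so that $C_1(l)\to 1/m^\ast$, and then rescale the weight so that both $\int|D\zeta|^{m^\ast}$ and the contributions of $\al$ and $\be V$ vanish before sending $l\to m$. The only cosmetic difference is that you use a translated-and-dilated bump with two parameters ($\nu\to\infty$, $|y|/\nu\to\infty$), whereas the paper achieves the same effect in one stroke by choosing $\zeta$ supported outside $B_R$ and setting $\zeta_\ep(x)=\ep^{d/m^\ast}\zeta(\ep x)$ with $\ep\to0$, which simultaneously sends the support to infinity and makes $\int|D\zeta_\ep|^{m^\ast}=O(\ep^{m^\ast})$.
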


\begin{proof}
Let $\zeta\in C^\infty(\R^d)$ be a test function such that $\zeta\geq 0$ in $\R^d$,  $\int_{\R^d}\zeta^{m^\ast}dx=1$, and $(\supp\zeta)\cap B_R=\emptyset$. Let $(\lm_{\max}(\be),\phi)\in \R\times C^3(\R^d)$ be a solution of (EP). Then, similarly as in the proof of Proposition \ref{upper1}, we see that, for any $l>m$, there exist some $C_1(l),C_2>0$ with $C_1(l)\to 1/m^\ast$ and $C_2(l)\to \infty $ as $l\to m$ such that
\begin{align*}
\lm_{\max}(\be)\leq C_2(l)\int_{\R^d}( |D\zeta|^{m^\ast}+\zeta^{m^\ast}|\al|^{m^\ast})dx
+\int_{\R^d} \zeta^{m^\ast}(C_1(l)\rho^{m^\ast}+\be V)\,dx.
\end{align*}

We fix an arbitrary $\ep\in(0,1)$ and set $\zeta_\ep(x):=\ep^{\frac{d}{m^\ast}}\zeta(\ep x)$ for $x\in\R^d$. Then,
\begin{equation*}
\int_{\R^d}(\zeta_\ep)^{m^\ast}\,dx=1,\quad 
(\supp \zeta_\ep)\cap B_{\ep^{-1}R}=\emptyset,\quad 
\int_{\R^d}|D\zeta_\ep|^{m^\ast}\,dx\leq C\ep^{m^\ast}
\end{equation*}
for some $C>0$ not depending on $\ep$. In what follows $C$ denotes various constants not depending on $\ep$. We also fix an arbitrary $\theta>0$ and choose an $\ep_0>0$ so that $|\al(x)|+V(x)<\theta$ for all $x\not\in B_{\ep_0^{-1}R}$. 
Then, plugging $\zeta_\ep$, with $\ep<\ep_0$, into the above $\zeta$, we have
\begin{align*}
\lm_{\max}(\be)&\leq C_2(l)C\ep^{m^\ast}+C_2(l)\int_{B_{\ep_0^{-1}R}}\zeta_\ep^{m^\ast}|\al|^{m^\ast}dx+C_2(l)\theta^{m^\ast}\\
&\qquad +C_1(l)\rho^{m^\ast}+\be \int_{B_{\ep_0^{-1}R}}\zeta_\ep^{m^\ast}V\,dx+\be \theta.
\end{align*}
Since $B_{\ep_0^{-1}R}$ is bounded and $\zeta_\ep\to 0$ in $\R^d$ as $\ep\to 0$, we see by sending $\ep\to0$ in the above inequality that
\begin{equation*}
\lm_{\max}(\be)\leq C_2(l)\theta^{m^\ast}+C_1(l)\rho^{m^\ast}+\be \theta.
\end{equation*}
Letting $\theta\to 0$ and $l\to m$, we conclude that $\lm_{\max}(\be)\leq \rho^{m^\ast}/m^\ast$. Since $\be$ is arbitrary, we obtain the desired estimate. 
\end{proof}

The next proposition gives a lower bound under some structure condition on $(\al,V)$. 
\begin{prop}\label{de=0_lower}
Let (H0) hold with $\de=0$. Assume that there exist some $k_1>0$ and $R_1\geq R$ such that
\begin{equation}\label{structure1}
|x|(\al(x)+k_1V(x))\geq d-1,\qquad \forall x\not\in B_{R_1}.
\end{equation}
Then, there exists a $\be_0\geq 0$ such that $\lm_{\max}(\be)=\rho^{m^\ast}/m^\ast$ for all $\be\geq \be_0$.
\end{prop}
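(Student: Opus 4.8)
By Proposition~\ref{de=0} we already have $\lm_{\max}(\be)\le\rho^{m^\ast}/m^\ast$ for every $\be\ge0$, so the plan is to produce, for every sufficiently large $\be$, a subsolution $(\lm,\phi)$ of (EP) with $\lm=\rho^{m^\ast}/m^\ast$; by the very definition of $\lm_{\max}(\be)$ this forces $\lm_{\max}(\be)\ge\rho^{m^\ast}/m^\ast$, hence equality. The natural candidate is the $\de\to0$ analogue of the profile used in Proposition~\ref{prop.lower}: I would fix $\phi_0\in C^3(\R^d)$ with $\phi_0(x)=-\rho^{m^\ast-1}|x|$ for all $|x|\ge R_1$, together with an arbitrary smooth extension inside $B_{R_1}$.

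For $|x|\ge R_1$ one has $b(x)=(\rho+\al(x))|x|^{-1}x$, $D\phi_0=-\rho^{m^\ast-1}x/|x|$, $|D\phi_0|\equiv\rho^{m^\ast-1}$, $\De|x|=(d-1)/|x|$, and $(m^\ast-1)m=m^\ast$; a direct computation then gives
\[
F[\phi_0](x)=-\frac{\rho^{m^\ast}}{m^\ast}+\rho^{m^\ast-1}\Big(\frac{d-1}{|x|}-\al(x)\Big),\qquad |x|\ge R_1 .
\]
This is the crux of the argument: the leading part of $F[\phi_0]$ is exactly the target constant $-\rho^{m^\ast}/m^\ast$, and the remainder is of precisely the form that hypothesis~(\ref{structure1}) controls. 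Indeed, (\ref{structure1}) rewrites as $(d-1)/|x|-\al(x)\le k_1 V(x)$ on $\{|x|\ge R_1\}$, so there
\[
F[\phi_0](x)-\be V(x)+\frac{\rho^{m^\ast}}{m^\ast}\ \le\ (\rho^{m^\ast-1}k_1-\be)\,V(x)\ \le\ 0
\]
for every $\be\ge\rho^{m^\ast-1}k_1$, using only $V\ge0$.

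It remains to handle the fixed compact set $\overline{B_{R_1}}$. There $F[\phi_0]$ is continuous, hence bounded by some $M_0>0$, while $V\ge v_0>0$ on $\overline{B_{R_1}}$ (this is where the strict positivity of $V$ postulated in (H0) is used), so $F[\phi_0](x)-\be V(x)\le M_0-\be v_0\le-\rho^{m^\ast}/m^\ast$ once $\be\ge(M_0+\rho^{m^\ast}/m^\ast)/v_0$. Taking $\be_0$ to be the maximum of the two thresholds, the pair $(\rho^{m^\ast}/m^\ast,\phi_0)$ is a subsolution of (EP) for all $\be\ge\be_0$, which together with Proposition~\ref{de=0} yields the claim. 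I expect the only genuinely delicate point to be the choice of the profile $\phi_0$ so that $F[\phi_0]$ splits exactly as above; once that is in place, both the exterior estimate (via $V\ge0$ and (\ref{structure1})) and the interior estimate (via boundedness of $F[\phi_0]$ and positivity of $V$) are routine. One could alternatively invoke the monotonicity of $\be\mapsto\lm_{\max}(\be)$ from Theorem~\ref{pre2} to propagate the equality from $\be_0$ to all larger $\be$, though the direct construction already covers them.
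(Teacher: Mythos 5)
Your proof is correct and follows essentially the same strategy as the paper: construct $\phi_0$ with the radial profile $-\rho^{m^\ast-1}|x|$ at infinity, use the identity $m(m^\ast-1)=m^\ast$ to see $F[\phi_0]$ split into $-\rho^{m^\ast}/m^\ast$ plus the remainder $\rho^{m^\ast-1}\bigl((d-1)/|x|-\al(x)\bigr)$, absorb that remainder via (\ref{structure1}) and $V\geq 0$ on the exterior, and enlarge $\be$ to handle the compact interior where $V$ is bounded away from zero; combining the resulting subsolution with Proposition~\ref{de=0} gives the equality. The only cosmetic difference from the paper is that you impose the explicit form of $\phi_0$ outside $B_{R_1}$ rather than outside $B_R$, which changes nothing since the exterior estimate is only invoked for $|x|\geq R_1$.
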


\begin{proof}
Let $\phi_0\in C^3(\R^d)$ be any function such that
$\phi_0(x)=-\rho^{m^\ast-1}|x|$ for all $x\not\in B_{R}$. Then we observe by direct computations that, for any $x\not\in B_R$,
\begin{align*}
F[\phi_0](x) &=\frac{(d-1)\rho^{m^\ast-1}}{|x|}-\rho^{m^\ast-1}(\rho+\al(x))+
 \frac{\rho^{m^\ast}}{m}\\
 &=\frac{(d-1)\rho^{m^\ast-1}}{|x|}-\rho^{m^\ast-1}\al(x)-
 \frac{\rho^{m^\ast}}{m^\ast}.
\end{align*}
In particular, for any $x\not\in B_R$, 
\begin{equation*}
\frac{\rho^{m^\ast}}{m^\ast}+F[\phi_0](x)-\beta V(x)\leq \frac{\rho^{m^\ast-1}}{|x|}\big\{d-1-|x|(\al(x)+\beta \rho^{1-m^\ast}V(x))\big\}.
\end{equation*}
Setting $\be_1:=\rho^{m^\ast-1}k_1$ and choosing  $\be\geq \be_1$, we see in view of (\ref{structure1}) that
\begin{equation*}
\frac{\rho^{m^\ast}}{m^\ast}+F[\phi_0](x)-\beta V(x)\leq 0,
\quad  x\not\in B_{R_1}.
\end{equation*}
Taking $\be$ so large that $\be\geq \beta_2:=(\inf_{B_{R_1}}V)^{-1}(\rho^{m^\ast}/m^\ast+\sup_{B_{R_1}}F[\phi_0])$, we also obtain
\begin{equation*}
\frac{\rho^{m^\ast}}{m^\ast}+F[\phi_0](x)-\beta V(x)\leq 0,
\quad  x\in B_{R_1}.
\end{equation*}
Thus, $(\rho^{m^\ast}/m^\ast,\phi_0)$ is a subsolution of (EP) for any $\beta\geq \be_0:=\max\{\beta_1,\beta_2\}$. This implies by the definition of $\lm_{\max}(\be)$ that $\lm_{\max}(\be)\geq \rho^{m^\ast}/m^\ast$ for all $\beta\geq \be_0$.
\end{proof}

Now, we seek for a sufficient condition so that the opposite situation happens. To this end, we start with an auxiliary lemma which will be used in later discussions. 

\begin{lem}\label{m_2}
Let $H(p):=(1/m)|p|^m$ with $m>1$, and set
\begin{equation}\label{h(p;q)}
h(p;q):=H(q+p)-H(q)-DH(q)\cdot p,\qquad p,q\in\R^d.
\end{equation}
Then, for any $r>0$ and $K>r$, there exists a $c>0$ such that $h(p; q)\geq c|p|^{2}$ for all $q\in \partial B_r:=\{w\in\R^d\,|\,|w|=r\}$ and $p\in B_K$.
\end{lem}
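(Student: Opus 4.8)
The plan is to prove the quadratic lower bound by combining the strict convexity of $H$ with a compactness argument. First I would observe that $h(p;q)$ is, for fixed $q$, the second-order Taylor remainder of the smooth convex function $H$ at $q$, so $h(p;q)\geq 0$ always, with equality at $p=0$. Since $m>1$, $H\in C^2(\R^d\setminus\{0\})$ and $H\in C^1(\R^d)$; the only point where $D^2H$ may fail to exist or degenerate is the origin, which is exactly why the hypothesis restricts $q$ to the sphere $\partial B_r$ with $r>0$.

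The main step is to get a uniform constant. Fix $r>0$ and $K>r$. For each fixed $q\in\partial B_r$, I would show $\inf_{p\in B_K}\, h(p;q)/|p|^2>0$ (interpreting the quotient at $p=0$ as its limit $\tfrac12 D^2H(q)[\hat p,\hat p]$, which is strictly positive because $D^2H(q)$ is positive definite for $q\neq 0$). This infimum is attained or approached: away from $p=0$ the quotient is continuous and positive on the compact set $\{p\in B_K : |p|\geq \eta\}$ for any $\eta>0$; near $p=0$ one uses the Taylor expansion $h(p;q)=\tfrac12 D^2H(q)[p,p]+o(|p|^2)$ to see the quotient stays bounded below by a positive constant in a neighbourhood of the origin. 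Hence $c(q):=\inf_{p\in B_K} h(p;q)/|p|^2>0$ for each $q$.

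Finally I would upgrade this to uniformity in $q\in\partial B_r$. The cleanest route: consider the function $(q,p)\mapsto h(p;q)/|p|^2$ extended continuously to $p=0$ by $\tfrac12 D^2H(q)[\hat p,\hat p]$ — but the extension depends on the direction $\hat p$, so instead work on the compact set $\partial B_r\times \big(\overline{B_K}\setminus B_\eta\big)$, where the quotient is jointly continuous and strictly positive, giving a uniform bound $c_1>0$ there; and separately, by a second-order Taylor estimate with remainder uniform over the compact sphere $\partial B_r$ (all derivatives of $H$ up to order $3$ are bounded on an annulus around $\partial B_r$), obtain $h(p;q)\geq c_2|p|^2$ for all $q\in\partial B_r$ and $|p|\leq \eta$, with $\eta$ and $c_2>0$ chosen uniformly. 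Taking $c:=\min\{c_1,c_2\}$ finishes the proof.

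The main obstacle is precisely the behaviour near $p=0$, where the naive continuity argument breaks down because $h$ vanishes to second order and one cannot simply invoke continuity of a quotient at a zero of the denominator. This is handled by the uniform Taylor expansion, for which the key input is that $D^2H$ is continuous and positive definite on the compact sphere $\partial B_r$ (bounded away from the origin), so $\inf_{q\in\partial B_r}\lambda_{\min}(D^2H(q))>0$; one must also check that the cubic remainder term is controlled uniformly, which follows from boundedness of the third derivatives of $H$ on a fixed closed annulus containing $\partial B_r$ but not the origin.
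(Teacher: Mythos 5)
Your proof is correct, but it takes a genuinely different route from the paper's. The paper computes the remainder via the integral Taylor formula $h(p;q)=\int_0^1 t\,D^2H(q+(1-t)p)\,p\cdot p\,dt$, uses the explicit Hessian of $|w|^m$ to get $D^2H(w)p\cdot p\geq\min(1,m-1)\,|w|^{m-2}|p|^2$, and then splits into two cases according to the sign of $m-2$: for $1<m<2$ the factor $|w|^{m-2}$ blows up at the origin, so it suffices to bound $|q+(1-t)p|<2K$ from above (with a short continuity argument to dispose of the exceptional rays where the segment passes through the origin); for $m\geq2$ the factor degenerates at the origin, so the integration is restricted to $t$ near $1$, where the integration point stays within distance $r/2$ of $q$ and hence is bounded away from $0$. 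This produces explicit constants $c=c(m,r,K)$. Your argument replaces this explicit case analysis by a soft compactness argument: joint continuity and strict positivity of $h(p;q)/|p|^2$ on $\partial B_r\times\big(\overline{B_K}\setminus B_\eta\big)$ gives a uniform bound away from $p=0$, and a uniform second-order Taylor bound handles $|p|\leq\eta<r$, where the whole segment $[q,q+p]$ stays in a fixed closed annulus avoiding the origin on which $D^2H$ is continuous and uniformly positive definite. This is conceptually cleaner --- it avoids the explicit Hessian formula and the split on $m$ --- at the price of an inexplicit constant; it rests on the same two facts as the paper's proof (positive definiteness of $D^2H$ away from $0$, and control of the segment's distance to $0$). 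One small streamlining you could make: near $p=0$ you do not actually need third-derivative bounds, since the integral remainder $h(p;q)=\int_0^1(1-t)\,D^2H(q+tp)p\cdot p\,dt$ combined with a lower bound on the smallest eigenvalue of $D^2H$ over the annulus already gives $h(p;q)\geq c_2|p|^2$ directly.
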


\begin{proof}
The proof is divided into two cases according to the value of $m$. We first consider the case where $1<m<2$. Let $q\in \partial B_r$ and $p\in B_K$. Suppose for a moment that $p_t:=q+(1-t)p\ne 0$ for all $t\in[0,1]$. Then, since $H\in C^2(\R^d\setminus \{0\})$, we see by Taylor's theorem that
\begin{align*}
h(p;q)&=H(q+p)-H(q)-DH(q)\cdot p\\
&=\int_0^1t\,D^2H(p_t)p\cdot p\,dt
=\int_0^1t\,|p_t|^{m-2}\left\{|p|^2+(m-2)\frac{(p_t\cdot p)^2}{|p_t|^2}\right\}dt\\
&\geq \int_0^1t\,|p_t|^{m-2}\left\{|p|^2+(m-2)\frac{|p_t|^2 |p|^2}{|p_t|^2}\right\}dt\\
&=(m-1)|p|^2\int_0^1t\,|p_t|^{m-2}dt,
\end{align*}
where $D^2H(p)$ denotes the Hessian matrix of $H(p)$.
Since $|p_t|\leq |q|+|p|< r+K<2K$, we have $|p_t|^{m-2}>(2K)^{m-2}$ for all $t\in[0,1]$. Thus,
\begin{equation*}
h(p;q)>
(m-1)|p|^2\int_0^1t\,(2K)^{m-2}dt=(m-1)2^{m-3}K^{m-2}|p|^2.
\end{equation*}
In view of the continuity of $h(p;q)$ in $p$, this inequality is still valid even if $p_t=0$ for some $t\in[0,1]$. Hence, our claim holds with $c:=(m-1)2^{m-3}K^{m-2}$. 
 
We next consider the case where $m\geq 2$. Then, for any $p\in B_K$,  
\begin{align*}
h(p;q)&=\int_0^1t\,|p_t|^{m-2}\left\{|p|^2+(m-2)\frac{(p_t\cdot p)^2}{|p_t|^2}\right\}dt
\geq |p|^2\int_0^1t\,|p_t|^{m-2}dt.
\end{align*}
Since $1-r/(2K)\leq t\leq 1$  implies that $|p_t|\geq |q|-(1-t)|p|> r-(r/2K)K=r/2$, we have
\begin{align*}
h(p;q)&> |p|^2\int_{1-r/(2K)}^1t\Big(\frac{r}{2}\Big)^{m-2}\,dt=\frac{(4K-r)r^{m-1}}{2^{m+1}K^2}|p|^2.
\end{align*}
Hence, our claim holds with $c:=(4K-r)r^{m-1}2^{-(m+1)}K^{-2}$.

In any case, $c$ does not depend on the choice of $q\in \partial B_r$ and $p\in B_K$. Hence, we have completed the proof. 
\end{proof}

Taking into account the previous lemma, one has the following result.

\begin{prop}\label{de=0_upper}
Let (H0) hold with $\de=0$. Assume that, for any $k>0$, there exist $\mu>0$ and $R_2\geq R$ such that
\begin{equation}\label{structure2}
|x|(\al(x)+k|\al(x)|^{2}+kV(x))\leq d-1-\mu,\qquad \forall x\not\in  B_{R_2}. 
\end{equation}
Then, $\lm_{\max}(\be)<\rho^{m^\ast}/m^\ast$ for all $\be\geq 0$. 
\end{prop}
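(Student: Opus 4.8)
The plan is to establish a strict upper bound by contradiction, combining the integration-by-parts technique of Proposition~\ref{de>0_upper} (specialized to $\de=0$ in Proposition~\ref{de=0}) with the quadratic lower bound on the Hamiltonian gap supplied by Lemma~\ref{m_2}. Suppose for contradiction that $\lm_{\max}(\be)=\rho^{m^\ast}/m^\ast$ for some $\be\geq 0$ (by Proposition~\ref{de=0} and monotonicity it suffices to rule out equality; alternatively one may argue directly for each $\be$). Let $(\lm_{\max}(\be),\phi)\in\R\times C^3(\R^d)$ be a solution of (EP). The heuristic is that $\lm_{\max}=\rho^{m^\ast}/m^\ast$ should force $\phi$ to behave like the ``critical'' profile $\phi_0(x)=-\rho^{m^\ast-1}|x|$ used in Proposition~\ref{de=0_lower}, i.e. $D\phi(x)\to -\rho^{m^\ast-1}x/|x|$ as $|x|\to\infty$, and then structure condition (\ref{structure2}) (with its $-\mu$ gap) contradicts equation (EP) at infinity, since (EP) evaluated on such a profile produces $F[\phi_0](x)-\be V(x)\leq \rho^{m^\ast-1}|x|^{-1}\{d-1-|x|(\al(x)+\be\rho^{1-m^\ast}V(x))\}$, which becomes $\leq -\mu\rho^{m^\ast-1}|x|^{-1}<0$, precluding $F[\phi]=\be V-\lm_{\max}$ from holding with $\lm_{\max}=\rho^{m^\ast}/m^\ast$.

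Concretely, I would multiply (EP) by $\zeta_y^{m^\ast}$ where $\zeta_y=\zeta(\cdot-y)$, $\zeta\in C_0^\infty(B_1)$, $\zeta\geq 0$, $\int\zeta^{m^\ast}=1$, and $y\not\in B_{R+1}$, and integrate by parts. The key refinement over Proposition~\ref{de>0_upper} is to \emph{not} discard the full Hamiltonian term: write $\frac1m|D\phi|^m = H(D\phi)$ and use the splitting $H(D\phi)=H(q)+DH(q)\cdot(D\phi-q)+h(D\phi-q;q)$ with $q=q(x):=-\rho^{m^\ast-1}x/|x|$ (so $|q|=\rho^{m^\ast-1}=:r$ on $\supp\zeta_y$). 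Since $DH(q)=|q|^{m-2}q=\rho^{(m^\ast-1)(m-1)}q/|q|\cdot\rho^{m^\ast-1}$, one checks $DH(q)=-q\,|q|^{m-2}$ has magnitude $\rho^{m^\ast-1(m-1)}=\rho$ pointing inward, and the linear term $DH(q)\cdot(D\phi-q)$ can be integrated by parts against $\zeta_y^{m^\ast}$, transferring a derivative onto $\zeta_y$ and the (bounded, since $|q|=r$ is constant in $|x|$, only the direction varies) vector field $q$. The upshot is an identity of the form
\begin{equation*}
\lm_{\max}(\be)=\frac{\rho^{m^\ast}}{m^\ast}-\int_{\R^d}\zeta_y^{m^\ast}\,h(D\phi-q;q)\,dx+\mathcal{E}(y),
\end{equation*}
where $\mathcal{E}(y)$ collects the $\al$-term, the potential term $\be\int\zeta_y^{m^\ast}V$, the divergence-of-$q$ term $\sim (d-1)/|y|$, and the $\zeta$-derivative error, all controlled by $C/|y|+C|\al(y)|+C\be V(y)$ up to lower-order, via the same $|x+y|^\gm\leq|y|^\gm+C_0|x||y|^{\gm-1}$ expansion as in Proposition~\ref{de>0_upper}.

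Now, assuming $\lm_{\max}(\be)=\rho^{m^\ast}/m^\ast$, the identity forces $\int_{\R^d}\zeta_y^{m^\ast}h(D\phi-q;q)\,dx\leq \mathcal{E}(y)\leq C(|y|^{-1}+|\al(y)|+\be V(y))$ for all large $|y|=:t$. By Lemma~\ref{m_2} (applicable because the gradient estimate in Theorem~\ref{pre1} with $\de=0$ gives $|D\phi|\leq K'$, hence $|D\phi-q|\leq K'+r$ lies in a fixed ball $B_K$, and $q\in\partial B_r$), $h(D\phi-q;q)\geq c|D\phi-q|^2$, so $\int\zeta_y^{m^\ast}|D\phi-q|^2\,dx\to 0$ as $|y|\to\infty$. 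One then upgrades this $L^2_{loc}$-type convergence to pointwise/uniform control: a standard interpolation or bootstrap using the elliptic equation (EP) for $\phi$ together with interior Schauder estimates on unit balls shows $|D\phi(x)+\rho^{m^\ast-1}x/|x||\to 0$ as $|x|\to\infty$. Plugging this back into (EP) at a point $x$ with $|x|$ large: $F[\phi](x)-\be V(x)=-\rho^{m^\ast}/m^\ast$, yet $F[\phi](x)=(d-1)\rho^{m^\ast-1}/|x|+o(1/|x|)-\rho^{m^\ast-1}\al(x)-\rho^{m^\ast}/m^\ast+o(|\al(x)|)+o(|D\phi-q|)$, giving $(d-1)\rho^{m^\ast-1}/|x|-\rho^{m^\ast-1}\al(x)-\be V(x)=o(1/|x|+|\al(x)|+V(x))$, i.e. $|x|(\al(x)+\be\rho^{1-m^\ast}V(x))=d-1+o(1)$; but taking $k$ in (\ref{structure2}) larger than $\be\rho^{1-m^\ast}$ (and absorbing the harmless $k|\al|^2$ term, which is lower order) yields $|x|(\al(x)+\be\rho^{1-m^\ast}V(x))\leq d-1-\mu+o(1)$ for large $|x|$, a contradiction. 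Hence $\lm_{\max}(\be)<\rho^{m^\ast}/m^\ast$ for every $\be\geq 0$.

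The main obstacle is the upgrade from the averaged smallness $\int_{B_1(y)}|D\phi-q|^2\to 0$ to genuine pointwise decay $D\phi(x)+\rho^{m^\ast-1}x/|x|\to0$: this requires combining the uniform gradient bound from Theorem~\ref{pre1} with interior elliptic (Schauder or $W^{2,p}$) estimates applied on translated unit balls to get equicontinuity of $D\phi$, so that $L^2$-smallness on each ball implies sup-norm smallness there. A secondary delicate point is bookkeeping the constants $C_1(l),C_2(l)$ from the Young-inequality splitting so that, after sending $l\to m$, the coefficient of the Hamiltonian term is exactly $1/m^\ast$ (leaving no slack that would weaken the identity); this is handled exactly as in Propositions~\ref{de>0_upper} and~\ref{de=0}, choosing the Young constant $c$ in $p\cdot q\leq \frac1m|cp|^m+\frac1{m^\ast}|c^{-1}q|^{m^\ast}$ tuned to $q=q(x)$ of fixed modulus $r=\rho^{m^\ast-1}$, which is what makes the leading constant $\rho^{m^\ast}/m^\ast$ appear cleanly.
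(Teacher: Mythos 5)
The core idea of comparing $\phi$ to the critical profile $\phi_0(x)=-\rho^{m^\ast-1}|x|$ and exploiting the quadratic gap from Lemma~\ref{m_2} is exactly right, and it is what the paper does. However, there is a genuine gap in how you try to extract a contradiction, and the route through pointwise decay of $D\phi$ is both unnecessary and, as set up, does not close.

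The problem is your claim that, with translated test functions $\zeta_y=\zeta(\cdot-y)$, the error $\mathcal{E}(y)$ is $\leq C(|y|^{-1}+|\al(y)|+\be V(y))\to 0$. Write $\psi=\phi-\phi_0$; after the substitution, the equation becomes $-\De\psi+b_0\cdot D\psi+h(D\psi;D\phi_0)=\be V-V_0$ with $b_0=\al(x)|x|^{-1}x$ and $V_0=(d-1)\rho^{m^\ast-1}|x|^{-1}-\rho^{m^\ast-1}\al(x)$. Multiplying by $\zeta_y^2$ (the natural power given the quadratic lower bound $h\geq c|D\psi|^2$) and integrating by parts produces the term $2\int\zeta_y D\zeta_y\cdot D\psi\,dx$; after Young's inequality this costs $(4/c)\int|D\zeta_y|^2\,dx$, and for a translation $\int|D\zeta_y|^2\,dx=\int|D\zeta|^2\,dx$ is a fixed positive constant, \emph{independent of} $y$. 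Meanwhile the favorable term coming from the structure condition is only $-\mu\rho^{m^\ast-1}\int\zeta_y^2|x|^{-1}\,dx\sim -\mu\rho^{m^\ast-1}/|y|$, which is smaller for large $|y|$. So the resulting inequality is of the form $\mu\rho^{m^\ast-1}/|y|\lesssim C$, which is true for all $y$ and yields no contradiction; in particular $\int\zeta_y^2\,h(D\psi;D\phi_0)\,dx$ need \emph{not} tend to zero as $|y|\to\infty$. Your subsequent program (upgrade averaged smallness to pointwise decay of $D\phi$ via Schauder, then contradict the structure condition pointwise) therefore never gets off the ground. This is not a bookkeeping issue that tuning the Young constant can repair.

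What makes the paper's argument work is that it replaces translation by the \emph{anisotropic scaling} $\zeta_\ep(x)=\ep^{d/2}\zeta(\ep x)$ with $\supp\zeta\cap B_{R_2}=\emptyset$. Under this scaling, the integration-by-parts error scales as $\int|D\zeta_\ep|^2=\ep^2\int|D\zeta|^2$, while the favorable weighted term scales as $\int\zeta_\ep^2|x|^{-1}\,dx=\ep\int\zeta^2|y|^{-1}\,dy$: both go to zero, but at \emph{different rates}. The resulting inequality $\mu\rho^{m^\ast-1}\ep\int\zeta^2|y|^{-1}\,dy\leq C\ep^2\int|D\zeta|^2\,dy$ gives, after dividing by $\ep$ and sending $\ep\to 0$, the contradiction $\int\zeta^2|y|^{-1}\,dy\leq 0$. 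No pointwise information about $D\phi$ is ever needed. If you replace your translated test functions by the rescaled ones and use $\zeta^2$ rather than $\zeta^{m^\ast}$ (since after subtracting $\phi_0$ the effective Hamiltonian is quadratic), your argument collapses to the paper's and the bootstrap step disappears.
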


\begin{proof}
We argue by contradiction assuming that $\lm_{\max}(\be)=\rho^{m^\ast}/m^\ast$ for some $\be>0$. In what follows, we fix such $\be$.
Let $(\rho^{m^\ast}/m^\ast,\phi)\in \R\times C^3(\R^d)$ be a solution of (EP). Then, 
\begin{equation}\label{eq_phi}
\frac{\rho^{m^\ast}}{m^\ast}-\De\phi(x)+b(x)\cdot D\phi(x)+\frac1m|D\phi(x)|^m=\be V(x), \quad x\in\R^d.
\end{equation}
Let $\phi_0\in C^3(\R^d)$ be any function such that $\phi_0(x)=-\rho^{m^\ast-1} |x|$ for $x\not\in B_R$. Set $V_0(x):=\rho^{m^\ast}/m^\ast+F[\phi_0](x)$ for $x\in\R^d$. Note that $V_0(x)=(d-1)\rho^{m^\ast-1}|x|^{-1}-\rho^{m^\ast-1} \al(x)$ for all $x\not\in B_R$. Then, we see that $\psi:=\phi-\phi_0$ satisfies
\begin{equation}\label{EP'}
-\De\psi(x)+b_0(x)\cdot D\psi(x)+H_0(x,D\psi(x))=\be V(x)-V_0(x),\quad x\in \R^d,
\end{equation}
with $b_0(x):=b(x)+|D\phi_0(x)|^{m-2}D\phi_0(x)$ and $H_0(x,p)=h(p;D\phi_0(x))$, where $h(p;q)$ is given by (\ref{h(p;q)}). Applying Lemma \ref{m_2} with $r=\rho^{m^\ast-1}$, $q=D\phi_0(x)$, and $K:=\max\{\sup_{\R^d}|D\psi|,r+1\}$, we conclude that 
\begin{equation*}
H_0(x,D\psi(x))\geq c|D\psi(x)|^{2},\qquad x\not\in B_R,
\end{equation*}
for some $c>0$. 
Since $b_0(x)=\al(x)|x|^{-1}x$ for $x\not\in B_R$, we see from (\ref{EP'}) that
\begin{align*}
-\De\psi(x)&+ (\al(x)|x|^{-1}x)\cdot D\psi(x)+c|D\psi(x)|^{2}\\
&\leq \frac{\rho^{m^\ast-1}}{|x|}\left\{ |x|(\al(x)+\be\rho^{1-m^\ast} V(x))-(d-1)\right\},\quad x\not\in B_R.
\end{align*}
We apply the Cauchy-Schwarz inequality $(\al(x)|x|^{-1}x)\cdot D\psi\leq (c/2)|D\psi|^2+C|\al(x)|^2$ for some $C>0$ depending only on $c$ to obtain 
\begin{align*}
-\De\psi(x)&+\frac{c}{2}|D\psi(x)|^{2}\\
&\leq \frac{\rho^{m^\ast-1}}{|x|}\left\{ |x|(\al(x)+C\rho^{1-m^\ast}|\al(x)|^{2}+\be\rho^{1-m^\ast} V(x))-(d-1)\right\}
\end{align*}
for $x\not\in B_R$. 
We then use (\ref{structure2}) with $k:=\rho^{1-m^\ast}\max\{C,\be\}$ to deduce that 
\begin{equation*}
-\De\psi+\frac{c}{2}|D\psi|^{2}\leq -\frac{\rho^{m^\ast-1}\mu}{|x|},\quad x\not\in B_{R_2},
\end{equation*}
for some $\mu>0$ and $R_2>0$.  

Now, fix a test function $\zeta\in C^\infty_0(\R^d)$ such that $\zeta\geq 0$ in $\R^d$, $\int_{\R^d} \zeta^{2}\,dx=1$, and $(\supp\zeta)\cap B_{R_2}=\emptyset$. Then, multiplying both sides of the previous estimate by $\zeta^{2}$ and using the integration by parts formula, we obtain
\begin{align*}
2\int_{\R^d}\zeta D\zeta\cdot D\psi\,dx +\frac{c}{2}\int_{\R^d}\zeta^{2} |D\psi|^{2}\,dx
 \leq -\rho^{m^\ast-1}\mu\int_{\R^d}\frac{\zeta(x)^2}{|x|}\,dx. 
\end{align*}
We apply Young's inequality to $D\zeta\cdot (\zeta D\psi)$ to obtain
\begin{equation*}
\int_{\R^d}\frac{\zeta(x)^{2}}{|x|}\,dx
\leq C\int_{\R^d}|D\zeta(x)|^{2}\,dx
\end{equation*}
for some $C>0$ not depending on $\zeta$. Plugging $\zeta_\ep(x):=\ep^{\frac{d}{2}}\zeta(\ep x)$, with $\ep\in(0,1)$, into the above $\zeta$, and noting that $\supp(\zeta_\ep)\subset (B_{\ep^{-1}R_2})^c\subset (B_{R_2})^c$, we have 
\begin{align*}
&\int_{\R^d}\frac{\zeta_\ep(x)^2}{|x|}\,dx
=\int_{\R^d}\frac{\ep^d\zeta(\ep x)^2}{|x|}\,dx
=\ep\int_{\R^d}\frac{\zeta(y)^2}{|y|}\,dy,\\
&\int_{\R^d}|D\zeta_\ep(x)|^2\,dx
=\int_{\R^d}\ep^2\ep^d|D\zeta(\ep x)|^2\,dx
=\ep^2\int_{\R^d}|D\zeta(y)|^2\,dy.
\end{align*}
Gathering these, we obtain
\begin{equation*}
\ep\int_{D}\frac{\zeta(y)^2}{|y|}\,dy
\leq \ep^2C\int_{D}|D\zeta(y)|^2\,dy,
\end{equation*}
which deduces a contradiction by dividing both sides of the last inequality by $\ep$ and sending $\ep\to0$. 
Hence, $\lm_{\max}(\be)<\rho^{m^\ast}/m^\ast$ for all $\be\geq 0$.
\end{proof}

\begin{prop}\label{de=0_bar}
Under the hypothesis of Proposition \ref{de=0_upper}, $\overline{\lm}_{\max}=\rho^{m^\ast}/m^\ast$.
\end{prop}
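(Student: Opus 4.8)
The plan is to prove the two inequalities separately. The bound $\overline{\lm}_{\max}\le\rho^{m^\ast}/m^\ast$ is free: Proposition \ref{de=0} already gives $\lm_{\max}(\be)\le\rho^{m^\ast}/m^\ast$ for every $\be\ge0$, so the limit inherits it (and Proposition \ref{de=0_upper} even makes it strict for each fixed $\be$). All the work is in the reverse inequality $\overline{\lm}_{\max}\ge\rho^{m^\ast}/m^\ast$. Since $\be\mapsto\lm_{\max}(\be)$ is non-decreasing by Theorem \ref{pre2}, it suffices to show: for every $\de\in(0,\rho^{m^\ast}/m^\ast)$ there is $\be_\de\ge0$ with $\lm_{\max}(\be)\ge\rho^{m^\ast}/m^\ast-\de$ for all $\be\ge\be_\de$; letting $\de\to0$ then gives the result.

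To obtain this I would construct, for each such $\de$, a subsolution of (EP) with $\lm=\rho^{m^\ast}/m^\ast-\de$. The candidate is the same radial function as in Proposition \ref{de=0_lower}: pick $\phi_0\in C^3(\R^d)$ with $\phi_0(x)=-\rho^{m^\ast-1}|x|$ for $|x|>R$. Reusing the identity established there, $\rho^{m^\ast}/m^\ast+F[\phi_0](x)=(d-1)\rho^{m^\ast-1}|x|^{-1}-\rho^{m^\ast-1}\al(x)$ for $|x|>R$, hence for those $x$
\begin{equation*}
\Big(\frac{\rho^{m^\ast}}{m^\ast}-\de\Big)+F[\phi_0](x)-\be V(x)
=-\de+\frac{(d-1)\rho^{m^\ast-1}}{|x|}-\rho^{m^\ast-1}\al(x)-\be V(x).
\end{equation*}
Now I would split $\R^d$ into an outer region and a fixed ball. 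Because $\al(x)\to0$ and $|x|^{-1}\to0$ as $|x|\to\infty$, one can choose $N_\de>R$ \emph{depending only on $\de$} (not on $\be$) so that $(d-1)\rho^{m^\ast-1}|x|^{-1}+\rho^{m^\ast-1}|\al(x)|<\de$ for all $|x|\ge N_\de$; on $\{|x|\ge N_\de\}$ the quantity above is then $\le-\be V(x)\le0$. On the compact set $\overline{B}_{N_\de}$ the function $\rho^{m^\ast}/m^\ast+F[\phi_0]$ is continuous, hence bounded above by a constant $M_\de$ independent of $\be$; since $V$ is strictly positive on $\overline{B}_{N_\de}$ (under (H1) one simply has $\min_{\overline{B}_{N_\de}}V=N_\de^{-\eta}>0$), choosing $\be_\de:=\max\{0,M_\de/\min_{\overline{B}_{N_\de}}V\}$ makes $(\rho^{m^\ast}/m^\ast-\de)+F[\phi_0]-\be V\le M_\de-\be V\le0$ on $\overline{B}_{N_\de}$ for every $\be\ge\be_\de$. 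Thus $(\rho^{m^\ast}/m^\ast-\de,\phi_0)$ is a subsolution of (EP) for all $\be\ge\be_\de$, and the definition of $\lm_{\max}(\be)$ yields $\lm_{\max}(\be)\ge\rho^{m^\ast}/m^\ast-\de$, as needed.

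The delicate point — and the step I would check most carefully — is the order in which the parameters are fixed: $N_\de$ must be selected using only the decay of $\al$ and $V$, before $\be$ is sent to infinity, so that the bound $M_\de$ on $F[\phi_0]$ over $\overline{B}_{N_\de}$ does not depend on $\be$; only then can the term $\be V$ absorb $M_\de$ on the (now frozen) ball, which is also the one place where strict positivity of $V$ on balls is used. Note that the structure condition (\ref{structure2}) plays no role in this lower bound — it is precisely what prevents the gain from reaching $\de=0$, i.e.\ what forces $\lm_{\max}(\be)<\rho^{m^\ast}/m^\ast$ in Proposition \ref{de=0_upper} — so the argument uses only (H0) with $\de=0$. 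Combining the lower bound just obtained (and letting $\de\to0$) with the trivial upper bound gives $\overline{\lm}_{\max}=\rho^{m^\ast}/m^\ast$.
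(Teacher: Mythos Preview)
Your proof is correct and follows essentially the same route as the paper: the upper bound is taken from Proposition \ref{de=0}, and for the lower bound you build the subsolution $\phi_0(x)=-\rho^{m^\ast-1}|x|$, fix a small parameter (your $\de$, the paper's $\ep$), choose a large radius using the decay of $\al$ and $|x|^{-1}$ so that the exterior inequality holds, and then take $\be$ large enough so that $\be V$ absorbs the bounded remainder on the fixed ball. Your explicit remark that the structure condition \eqref{structure2} is not used here is accurate and matches the paper's argument.
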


\begin{proof}
Let $\phi_0$ be as in the proof of Proposition \ref{de=0_upper}. Fix any $\ep>0$. Then, for any $x\not\in B_R$,
\begin{equation*}
\frac{\rho^{m^\ast}}{m^\ast}-\ep+F[\phi_0](x)=(d-1)\rho^{m^\ast-1}|x|^{-1}-\rho^{m^\ast-1}\al(x)-\ep.
\end{equation*}
Since $\al(x)\to0$ as $|x|\to\infty$, there exists an $R'\geq R$ such that, for any $\be\geq 0$,
\begin{equation*}
\frac{\rho^{m^\ast}}{m^\ast}-\ep+F[\phi_0](x)-\be V(x)\leq 0,\qquad x\not\in B_{R'}.
\end{equation*}
Choosing $\be$ so large that
$\be\geq \be_0:=(\inf_{B_{R'}}V)^{-1}(\rho^{m^\ast}/m^\ast+\sup_{B_{R'}}F[\phi_0])$,
we have
\begin{equation*}
\frac{\rho^{m^\ast}}{m^\ast}-\ep+F[\phi_0](x)-\be V(x)\leq 0,\quad x\in B_{R'}.
\end{equation*}
Thus, $(\rho^{m^\ast}/m^\ast-\ep,\phi_0)$ is a subsolution of (EP), which implies that $\lm_{\max}(\be)\geq\rho^{m^\ast}/m^\ast-\ep$ by the definition of $\lm_{\max}(\be)$. Since $\ep>0$ is arbitrary, we obtain the claim in view of Proposition \ref{de=0}.
\end{proof}

We are in position to discuss the asymptotic behavior of $\lm_{\max}(\be)$ as
$\be\to\infty$ under (H1). Recall that, in view of Theorem \ref{pre2},
$\lm_{\max}(\be)$ converges as $\be\to\infty$ to
$\overline{\lm}_{\max}:=\sup\{\lm_{\max}(\be)\,|\,\be\geq 0\}<\infty$, and that
one of the following (a) and (b) occurs:
\vskip2mm
(a)  there exists a $\be_0>0$ such that $\lm_{\max}(\be)=\overline{\lm}_{\max}$ for all $\be\geq \be_0$;

(b)  $\lm_{\max}(\be)<\overline{\lm}_{\max}$ for every $\be\geq 0$.  
\vskip2mm
\noindent The next theorem gives a characterization of the above dichotomy in terms of constants $a,\eta$ in (H1). 

\begin{thm}\label{main2}
Let (H1) hold with $\de=0$. Then $\overline{\lm}_{\max}=\rho^{m^\ast}/m^\ast$. Moreover, the following hold:\\
 (i) \ If $0<\eta\leq 1$ or $a\geq d-1$, then (a) occurs.\\
(ii) \ If $\eta> 1$ and $a< d-1$, then (b) occurs.
\end{thm}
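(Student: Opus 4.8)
The plan is to deduce Theorem~\ref{main2} from the structural Propositions~\ref{de=0_lower}, \ref{de=0_upper}, \ref{de=0_bar} already proved, by checking that the explicit potential and drift in (H1) with $\de=0$ — so that $\al(x)=a|x|^{-1}$ and $V(x)=|x|^{-\eta}$ outside $B_R$ — satisfy the corresponding hypotheses in the three cases. The equality $\overline{\lm}_{\max}=\rho^{m^\ast}/m^\ast$ is immediate: it follows from Proposition~\ref{de=0_bar} once we know that (H1) with $\de=0$ falls under the hypothesis of Proposition~\ref{de=0_upper} (for $\eta>1$, $a<d-1$) or, in the complementary range, from Proposition~\ref{de=0_lower} together with the upper bound in Proposition~\ref{de=0}. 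So the real content is checking the dichotomy (a)/(b).

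For part~(i), I would verify condition~(\ref{structure1}) of Proposition~\ref{de=0_lower}: I need $k_1>0$ and $R_1\geq R$ with $|x|(a|x|^{-1}+k_1|x|^{-\eta})\geq d-1$ for $|x|>R_1$, i.e. $a+k_1|x|^{1-\eta}\geq d-1$. If $a\geq d-1$, this holds for any $k_1>0$ and $R_1=R$ since $|x|^{1-\eta}>0$. If instead $0<\eta\leq 1$, then $|x|^{1-\eta}\to\infty$ (or is $\equiv 1$ when $\eta=1$) as $|x|\to\infty$, so for any fixed $k_1>0$ the left side tends to $+\infty$ (resp. to $a+k_1$), and in either subcase one can choose $k_1$ large and $R_1$ large enough that the inequality holds; I should treat $\eta<1$ and $\eta=1$ separately but both are elementary. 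Hence Proposition~\ref{de=0_lower} gives $\lm_{\max}(\be)=\rho^{m^\ast}/m^\ast$ for $\be$ large, which is exactly case~(a).

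For part~(ii), with $\eta>1$ and $a<d-1$, I would verify condition~(\ref{structure2}) of Proposition~\ref{de=0_upper}: for every $k>0$ I need $\mu>0$ and $R_2\geq R$ with $|x|\bigl(a|x|^{-1}+k a^2|x|^{-2}+k|x|^{-\eta}\bigr)\leq d-1-\mu$ for $|x|>R_2$, i.e. $a+ka^2|x|^{-1}+k|x|^{1-\eta}\leq d-1-\mu$. Since $\eta>1$ we have $1-\eta<0$, so both $ka^2|x|^{-1}$ and $k|x|^{1-\eta}$ tend to $0$ as $|x|\to\infty$; as $a<d-1$ we may pick $\mu:=(d-1-a)/2>0$ and then choose $R_2$ large (depending on $k$) so that $ka^2|x|^{-1}+k|x|^{1-\eta}\leq (d-1-a)/2$ for $|x|>R_2$. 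This gives (\ref{structure2}), so Proposition~\ref{de=0_upper} yields $\lm_{\max}(\be)<\rho^{m^\ast}/m^\ast=\overline{\lm}_{\max}$ for all $\be\geq 0$, which is case~(b).

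None of the three verifications is hard; the only mild subtlety is the $\eta=1$ boundary in part~(i), where $|x|^{1-\eta}\equiv 1$ and one must ensure $a+k_1\geq d-1$ can be arranged by taking $k_1$ large — harmless since $k_1$ is free. The main conceptual obstacle was already overcome in proving Propositions~\ref{de=0_lower} and~\ref{de=0_upper} (in particular the quadratic lower bound from Lemma~\ref{m_2} and the scaling test-function argument); here the work is purely a matter of matching the explicit exponents $\eta,a$ against the abstract inequalities, so I expect the proof of Theorem~\ref{main2} to be short.
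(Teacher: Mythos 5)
Your proposal is correct and follows essentially the same route as the paper: reduce to the structural conditions \eqref{structure1} and \eqref{structure2} of Propositions~\ref{de=0_lower} and \ref{de=0_upper}, verify them for the explicit power-law $\al$ and $V$ of (H1) by elementary exponent comparisons in the three cases, and then invoke Propositions~\ref{de=0}, \ref{de=0_lower}, \ref{de=0_upper}, and \ref{de=0_bar} to conclude. The handling of the dichotomy and of the $\eta=1$ boundary matches the paper's argument.
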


\begin{proof}
Suppose first that $0<\eta\leq 1$. Then, $|x|(\al(x)+k_1V(x))=a+k_1|x|^{1-\eta}$ for all $x\not\in B_{R}$. Choosing $k_1>0$ and $R_1>R$ so large that $a+k_1R_1^{1-\eta}\geq d-1$, we see that (\ref{structure1}) holds. Thus, in virtue of Propositions \ref{de=0} and \ref{de=0_lower}, we conclude that (a) occurs with $\overline{\lm}_{\max}=\rho^{m^\ast}/m^\ast$. 
Suppose next that $a\geq d-1$. Then, for any $k_1>0$ and $x\not\in B_{R}$, we have $|x|(\al(x)+k_1V(x))\geq a\geq  d-1$, which implies that (\ref{structure1}) holds. Hence, (a) occurs with $\overline{\lm}_{\max}=\rho^{m^\ast}/m^\ast$.

Finally, we suppose that $\eta> 1$ and $a< d-1$. We choose a $\mu>0$ so small that $2\mu<d-1-a$. Then, for any  $k>0$ and $x\not\in B_{R}$,
\begin{align*}
|x|(\al(x)+k|\al(x)|^2+kV(x))&=a+k|a|^2|x|^{-1}+k|x|^{1-\eta}\\
&<d-1-2\mu+k|a|^2|x|^{-1}+k|x|^{1-\eta}.
\end{align*}
Since $\eta>1$, one can find an $R_2>R$ such that $k|a|^2R_2^{-1}+kR_2^{1-\eta}<\mu$. Thus, 
\begin{align*}
|x|(\al(x)+k|\al(x)|^2+kV(x))<d-1-\mu,\quad x\not\in B_{R_2},
\end{align*}
which implies that (\ref{structure2}) holds. In view of Propositions \ref{de=0}, \ref{de=0_upper}, and \ref{de=0_bar}, we conclude that  (b) occurs with $\overline{\lm}_{\max}=\rho^{m^\ast}/m^\ast$. 
\end{proof}

\begin{rem}
Using (\ref{structure1}) or  (\ref{structure2}), one can derive various conditions on $\al$ and $V$ that guarantee (a) or (b). For instance, suppose that
\begin{equation*}
a_1|x|^{-1}\leq \al(x)\leq  a_2|x|^{-1},\qquad 
v_1|x|^{-\eta}\leq V(x)\leq  v_2|x|^{-\eta},\qquad x\not\in B_R,
\end{equation*}
for some $a_1,a_2\in\R$, $v_1,v_2>0$,  $\eta>0$, and $R>0$. Then, we are able to
specify suitable sufficient conditions, in terms of the constants above, so that
(a) or (b) hold. 
\end{rem}

By a careful reading of the arguments used in this section, one observes that the positivity of $V$ does not play any role outside $B_R$, although it is crucial in $B_R$. Taking this into account, one can prove the following theorem.  
\begin{thm}\label{main2+}
Let (H0) hold with $\de=0$. Assume that $\al(x)=a|x|^{-1}$ outside $B_R$ for some $a\in\R$ and $\supp V\subset B_{R'}$ for some $R'> R$, where $R$ is the constant in (H0). Then, $\overline{\lm}_{\max}=\rho^{m^\ast}/m^\ast$. Moreover, the following hold.\\
(i) \ If $a\geq d-1$, then (a) occurs. \\
(ii) \ If $a<d-1$, then (b) occurs.
\end{thm}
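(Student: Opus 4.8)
The plan is to mirror the proof of Theorem \ref{main2}, replacing the reliance on (\ref{structure1}) and (\ref{structure2}) by their compactly-supported analogues, and to check that the only place where positivity of $V$ on $\overline{B}_R$ is used—namely the passage from a subsolution inequality on $B_{R'}$ to a genuine subsolution of (EP)—still goes through. Since $\de=0$ and $\al(x)=a|x|^{-1}$ outside $B_R$, Proposition \ref{de=0} (which only uses (H0)) already gives $\lm_{\max}(\be)\leq\rho^{m^\ast}/m^\ast$ for all $\be\geq 0$, hence $\overline{\lm}_{\max}\leq\rho^{m^\ast}/m^\ast$.

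For part (i), assume $a\geq d-1$. Take $\phi_0\in C^3(\R^d)$ with $\phi_0(x)=-\rho^{m^\ast-1}|x|$ for $x\not\in B_R$, exactly as in Proposition \ref{de=0_lower}. The identity
\begin{equation*}
\frac{\rho^{m^\ast}}{m^\ast}+F[\phi_0](x)=\frac{(d-1)\rho^{m^\ast-1}}{|x|}-\rho^{m^\ast-1}\al(x)=\frac{\rho^{m^\ast-1}}{|x|}\big(d-1-a\big),\quad x\not\in B_R,
\end{equation*}
shows that the left-hand side is $\leq 0$ outside $B_R$ whenever $a\geq d-1$; since $V$ is supported in $B_{R'}$, subtracting $\be V$ changes nothing outside $B_R$, so $\tfrac{\rho^{m^\ast}}{m^\ast}+F[\phi_0](x)-\be V(x)\leq 0$ for all $x\not\in B_{R'}$. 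On the compact ball $\overline{B}_{R'}$ we now invoke (H0), which forces $V>0$ on $\overline{B}_R$ and $\overline{B}_R\subset\Int(\supp V)$, hence $\inf_{B_{R'}}V>0$; choosing $\be\geq\be_0:=(\inf_{B_{R'}}V)^{-1}\big(\rho^{m^\ast}/m^\ast+\sup_{B_{R'}}F[\phi_0]\big)$ makes the inequality hold on $B_{R'}$ too. Thus $(\rho^{m^\ast}/m^\ast,\phi_0)$ is a subsolution of (EP) for all $\be\geq\be_0$, so $\lm_{\max}(\be)\geq\rho^{m^\ast}/m^\ast$, which combined with Proposition \ref{de=0} yields $\lm_{\max}(\be)=\rho^{m^\ast}/m^\ast=\overline{\lm}_{\max}$ for $\be\geq\be_0$, i.e. (a). The same computation with $\rho^{m^\ast}/m^\ast$ replaced by $\rho^{m^\ast}/m^\ast-\ep$ (as in Proposition \ref{de=0_bar}) handles the borderline subcase and in any case re-establishes $\overline{\lm}_{\max}=\rho^{m^\ast}/m^\ast$.

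For part (ii), assume $a<d-1$. The key point is that the proof of Proposition \ref{de=0_upper} only manipulates the equation outside $B_R$: starting from a hypothetical solution $(\rho^{m^\ast}/m^\ast,\phi)$ of (EP), one sets $\psi:=\phi-\phi_0$, uses Lemma \ref{m_2} to get $H_0(x,D\psi)\geq c|D\psi|^2$ for $|x|\geq R$, applies Cauchy--Schwarz to absorb the $\al$-term, and derives $-\De\psi+\tfrac{c}{2}|D\psi|^2\leq \rho^{m^\ast-1}|x|^{-1}\{|x|(\al(x)+C\rho^{1-m^\ast}|\al(x)|^2+\be\rho^{1-m^\ast}V(x))-(d-1)\}$ for $x\not\in B_R$. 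Since $V\equiv 0$ outside $B_{R'}$ and $\al(x)=a|x|^{-1}$, for $|x|\geq R'$ the brace equals $a+C\rho^{1-m^\ast}|a|^2|x|^{-1}-(d-1)$, which for $|x|$ large is $\leq -\mu$ for some $\mu>0$ because $a<d-1$; so one gets $-\De\psi+\tfrac{c}{2}|D\psi|^2\leq -\rho^{m^\ast-1}\mu/|x|$ on $\{|x|\geq R_2\}$ for suitable $R_2$. From here the test-function/scaling argument ($\zeta_\ep(x)=\ep^{d/2}\zeta(\ep x)$, $\supp\zeta\cap B_{R_2}=\emptyset$) produces the same contradiction $\ep\int\zeta^2/|y|\,dy\leq\ep^2 C\int|D\zeta|^2\,dy$, ruling out $\lm_{\max}(\be)=\rho^{m^\ast}/m^\ast$ for every $\be\geq 0$; together with Proposition \ref{de=0} and the already-established $\overline{\lm}_{\max}=\rho^{m^\ast}/m^\ast$ this gives (b).

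I do not anticipate a genuine obstacle: the argument is a bookkeeping exercise showing that each step of Sections 4's propositions localizes either to the region $|x|\geq R$ (where only the structure of $b$ matters and $V$ has vanished) or to the compact ball $\overline{B}_{R'}$ (where (H0) guarantees $\inf V>0$). The one point deserving care is the lower-bound construction: one must verify that a single $\phi_0$, smooth on all of $\R^d$ and equal to $-\rho^{m^\ast-1}|x|$ outside $B_R$, can be chosen so that $\sup_{B_{R'}}F[\phi_0]$ is finite—this is immediate since $F[\phi_0]$ is continuous and $\overline{B}_{R'}$ compact—and that the resulting $\be_0$ is finite, which uses precisely $\inf_{B_{R'}}V>0$ from (H0). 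No other subtlety arises.
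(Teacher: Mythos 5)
The sketch for part (i) is essentially right, modulo a radius slip: you split at $B_{R'}$ and write $\be_0=(\inf_{B_{R'}}V)^{-1}(\cdots)$, but (H0) only guarantees $\inf_{\overline{B}_R}V>0$ (via $\overline{B}_R\subset\Int(\supp V)$); $V$ may well vanish on parts of $B_{R'}\setminus\overline{B}_R$, so your $\be_0$ could be $+\infty$. The correct split is at $B_R$: since $a\geq d-1$ gives $\tfrac{\rho^{m^\ast}}{m^\ast}+F[\phi_0](x)=\rho^{m^\ast-1}(d-1-a)|x|^{-1}\leq 0$ for all $x\notin B_R$ and $V\geq 0$ only helps there, while on $\overline{B}_R$ one uses $\inf_{\overline{B}_R}V>0$. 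With that fix, (i) and the contradiction argument in (ii) go through as you describe, and your observation that Proposition \ref{de=0_upper} only works outside $B_R$ (where the $V$-term drops out entirely beyond $B_{R'}$) is accurate.

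The genuine gap is the assertion $\overline{\lm}_{\max}=\rho^{m^\ast}/m^\ast$ when $a<d-1$, which you invoke to conclude (b). You claim the Proposition \ref{de=0_bar} computation ``in any case re-establishes'' it, but that argument does \emph{not} localize to $B_R$. With $\phi_0(x)=-\rho^{m^\ast-1}|x|$ for $|x|>R$ one has
$\tfrac{\rho^{m^\ast}}{m^\ast}-\ep+F[\phi_0](x)=\rho^{m^\ast-1}(d-1-a)|x|^{-1}-\ep$ for $|x|>R$,
which is strictly positive on the annulus $R<|x|<R_\ep:=\rho^{m^\ast-1}(d-1-a)/\ep$. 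In Proposition \ref{de=0_bar} this excess is absorbed by $\be V$ on $B_{R_\ep}$, which works under (H1) because $V=|x|^{-\eta}>0$ everywhere. Here, once $\ep<\rho^{m^\ast-1}(d-1-a)/R'$ one has $R_\ep>R'$, so on the shell $R'<|x|<R_\ep$ both $V\equiv 0$ and the excess is strictly positive: $(\rho^{m^\ast}/m^\ast-\ep,\phi_0)$ is simply not a subsolution. The linear radial ansatz only yields $\lm_{\max}(\be)\geq \rho^{m^\ast}/m^\ast-\rho^{m^\ast-1}(d-1-a)/R'$ for large $\be$, which is bounded away from $\rho^{m^\ast}/m^\ast$. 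So the argument you give (and, to be fair, the paper's own one-sentence remark preceding the theorem) does not establish $\overline{\lm}_{\max}=\rho^{m^\ast}/m^\ast$ in case (ii); without it, ruling out $\lm_{\max}(\be)=\rho^{m^\ast}/m^\ast$ does not by itself yield (b). A different construction of the subsolution in the transitional region (or a non-subsolution-based lower bound on $\overline{\lm}_{\max}$) is needed to close this step.
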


\section{The sharp estimate for $\de=0$}
In this section, we establish a sharp estimate of the form (\ref{c_1}) when (b)
occurs in the moderate drift case. 

\begin{thm}\label{main3}
Let (H1) hold with $\de=0$. Assume that $a$ and $\eta$ satisfy condition (ii) in Theorem \ref{main2}. Then, 
\begin{equation}\label{sharp2}
\lim_{\be\to \infty}\be^{\frac{1}{\eta-1}}\left(\frac{\rho^{m^\ast}}{m^\ast}-\lm_{\max}(\be)\right)
=(\eta-1)\left\{\frac{\rho^{m^\ast-1}}{\eta}(d-1-a)\right\}^{\frac{\eta}{\eta-1}}=:c_1.
\end{equation}
\end{thm}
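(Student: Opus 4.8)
The plan is to adapt the two-sided strategy from Section 3: build an explicit subsolution for the lower bound on $\lm_{\max}(\be)$ (equivalently, an upper bound on $\rho^{m^\ast}/m^\ast-\lm_{\max}(\be)$), and exploit the test-function/integration-by-parts machinery from Propositions \ref{de=0_upper} and \ref{de=0_bar} for the matching bound. Throughout I would work with $\psi=\phi-\phi_0$, where $\phi_0(x)=-\rho^{m^\ast-1}|x|$ outside $B_R$, so that (as in \eqref{EP'}) the equation becomes $-\De\psi+b_0\cdot D\psi+H_0(x,D\psi)=\be V(x)-V_0(x)$ with $b_0(x)=a|x|^{-2}x$ and $V_0(x)=\rho^{m^\ast-1}\{(d-1)|x|^{-1}-a|x|^{-1}\}=\rho^{m^\ast-1}(d-1-a)|x|^{-1}$ outside $B_R$. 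The key heuristic is that the defect $\rho^{m^\ast}/m^\ast-\lm_{\max}(\be)$ is governed by the competition, for $|x|$ large, between the ``confining'' term $V_0(x)\sim \rho^{m^\ast-1}(d-1-a)|x|^{-1}$ (positive since $a<d-1$) and the perturbation $\be V(x)=\be|x|^{-\eta}$; these balance at radius $r_\be$ with $r_\be^{-1}\sim \be r_\be^{-\eta}$, i.e.\ $r_\be\sim \be^{1/(\eta-1)}$, and the resulting cost is of order $r_\be^{-1}\sim\be^{-1/(\eta-1)}$, matching the scaling in \eqref{sharp2}.

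For the \emph{upper bound on the defect} (lower bound on $\lm_{\max}$), I would look for a subsolution of the form $\phi_\lm=\phi_0+\psi_\lm$ with $\lm=\rho^{m^\ast}/m^\ast-\de_\be$ and $\psi_\lm$ radial, chosen so that $F[\phi_\lm](x)-\be V(x)\le-\lm$ everywhere. Outside a large ball the dominant balance is the first-order ODE obtained by dropping $-\De\psi$ and the quadratic term $H_0\gtrsim c|D\psi|^2$; solving $a|x|^{-1}x\cdot D\psi = \be|x|^{-\eta}-\rho^{m^\ast-1}(d-1-a)|x|^{-1}+\de_\be$ suggests taking $\de_\be = (\eta-1)\{\rho^{m^\ast-1}(d-1-a)/\eta\}^{\eta/(\eta-1)}\be^{-1/(\eta-1)}(1+o(1))$, precisely so that the right-hand side can be made nonpositive for all $|x|\ge R$ after a suitable choice of the free constant in $\psi$. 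One then checks that the neglected terms $-\De\psi$ and $c|D\psi|^2$ are genuinely lower order (they scale like $|x|^{-2}$ or worse against the $|x|^{-1}$ main terms) and that the inside-$B_R$ inequality is arranged by taking $\be$ large, using strict positivity of $V$ on $\overline B_R$ exactly as in Propositions \ref{de=0_lower} and \ref{de=0_bar}. This yields $\rho^{m^\ast}/m^\ast-\lm_{\max}(\be)\le (1+o(1))c_1\be^{-1/(\eta-1)}$.

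For the \emph{lower bound on the defect}, I would take a solution $(\lm_{\max}(\be),\phi)$ and $\psi=\phi-\phi_0$ as above, so that by Lemma \ref{m_2} and a Cauchy–Schwarz absorption of the $b_0\cdot D\psi$ term (exactly the computation in the proof of Proposition \ref{de=0_upper}) one gets
\[
-\De\psi(x)+\tfrac{c}{2}|D\psi(x)|^2 \le \frac{\rho^{m^\ast-1}}{|x|}\bigl\{|x|(\al(x)+C|\al(x)|^2+\be\rho^{1-m^\ast}V(x))-(d-1)\bigr\}+\bigl(\tfrac{\rho^{m^\ast}}{m^\ast}-\lm_{\max}(\be)\bigr)
\]
for $x\notin B_R$, where the last term comes from keeping $\lm_{\max}(\be)$ rather than $\rho^{m^\ast}/m^\ast$. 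Now instead of the scale-invariant test function used before, I would insert a test function $\zeta_\be$ concentrated near the critical radius $r_\be\sim\be^{1/(\eta-1)}$ (e.g.\ $\zeta_\be(x)=r_\be^{-d/2}\zeta(x/r_\be)$ with $\zeta$ supported in an annulus $\{1\le|x|\le 2\}$, rescaled so $\supp\zeta_\be\cap B_R=\emptyset$ for $\be$ large), multiply by $\zeta_\be^2$, integrate by parts, and discard the nonnegative $\tfrac c2\int\zeta_\be^2|D\psi|^2$; Young's inequality on $\int\zeta_\be D\zeta_\be\cdot D\psi$ then gives, after dividing through, an inequality of the form
\[
\Bigl(\frac{\rho^{m^\ast}}{m^\ast}-\lm_{\max}(\be)\Bigr)\int\zeta_\be^2\,dx \ge \int \frac{\zeta_\be(x)^2}{|x|}\bigl\{\rho^{m^\ast-1}(d-1-a)-\be\rho^{m^\ast-1}|x|^{1-\eta}(1+o(1))\bigr\}\,dx - C\!\int|D\zeta_\be|^2\,dx.
\]
On $\supp\zeta_\be$ one has $|x|\approx r_\be$ and $\be|x|^{1-\eta}\approx\be r_\be^{1-\eta}$, so by tuning the annulus to sit just \emph{outside} $r_\be$ (where the brace is a definite positive multiple of $\rho^{m^\ast-1}(d-1-a)$) the main term is $\gtrsim r_\be^{-1}\sim\be^{-1/(\eta-1)}$, while the gradient error $\int|D\zeta_\be|^2\sim r_\be^{-2}$ is lower order; optimizing the annulus position against $\be$ recovers the constant $c_1$.

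The main obstacle I anticipate is making the two constants \emph{match} rather than merely agreeing up to a multiplicative factor: on the subsolution side the free constant in $\psi_\lm$ must be chosen optimally (it is the analogue of minimizing $f_\ep$ over $r\ge R$ in Proposition \ref{prop.lower}), and on the dual side the test function must be localized at exactly the radius $r_\be=(\eta\be/(\rho^{m^\ast-1}(d-1-a)))^{1/(\eta-1)}$ and then a genuine optimization over the support/width of $\zeta$ must be performed so that the liminf and limsup both equal $(\eta-1)\{\rho^{m^\ast-1}(d-1-a)/\eta\}^{\eta/(\eta-1)}$; controlling the $o(1)$ errors (from $\al(x)=a|x|^{-1}$ being exact only outside $B_R$, from the $C|\al|^2\sim|x|^{-2}$ term, from the Laplacian, and from the gradient estimate of Theorem \ref{pre1} entering the constant $K$ in Lemma \ref{m_2}) uniformly as $\be\to\infty$ is the delicate bookkeeping that the proof will have to carry out carefully.
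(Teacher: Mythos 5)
Your plan for the lower bound on the defect (the upper bound on $\lm_{\max}$) matches the paper's argument: subtract $\phi_0$ from a solution, invoke Lemma~\ref{m_2} and Cauchy--Schwarz to absorb the $b_0\cdot D\psi$ term, then test against a rescaled radial cutoff supported near the critical radius $r_\be\sim\be^{1/(\eta-1)}$, integrate by parts, use Young's inequality to absorb $\int\zeta D\zeta\cdot D\psi$ into $\frac{c}{2}\int\zeta^2|D\psi|^2$, and optimize the location of the annulus. The paper implements this with $\zeta_\ep(x)=\ep^{d/2}\zeta(\ep|x|)$ and $\ep=\theta\be^{-1/(\eta-1)}$, which is precisely your $\zeta_\be$; the $|\al|^2$, $|D\zeta_\ep|^2$ and Laplacian errors are handled exactly as you anticipate and contribute $O(\ep^2)=o(\be^{-1/(\eta-1)})$.

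The subsolution side, however, is over-engineered in a way that creates a gap. You propose to determine a radial correction $\psi_\lm$ from the first-order ODE $a|x|^{-1}x\cdot D\psi=\be|x|^{-\eta}-\kp|x|^{-1}+\de_\be$, but this ODE degenerates when $a=0$, which is an admissible value under hypothesis (ii) whenever $d\geq 2$; and even for $a\neq 0$, the claim that the neglected terms scale like $|x|^{-2}$ is not correct as stated: the resulting $\psi_\lm$ grows linearly, so $|D\psi_\lm|^2$ tends to a nonzero constant $\sim(\de_\be/a)^2$, and this is negligible only because $\de_\be\to 0$, not because of any decay in $|x|$. The paper's construction sidesteps all of this: no correction is needed. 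Taking $\phi_\lm=\phi_0$ outright, the residual outside $B_R$ is exactly $-\rho^{m^\ast}/m^\ast+\kp|x|^{-1}-\be|x|^{-\eta}$, whose supremum over $|x|\geq R$ is attained at $r_0=(\eta\be/\kp)^{1/(\eta-1)}$ (for $\be$ large) and equals $f(r_0)=(\eta-1)(\kp/\eta)^{\eta/(\eta-1)}\be^{-1/(\eta-1)}=c_1\be^{-1/(\eta-1)}$. Hence $(\rho^{m^\ast}/m^\ast-f(r_0),\phi_0)$ is already a subsolution for $\be$ large, giving $\rho^{m^\ast}/m^\ast-\lm_{\max}(\be)\leq c_1\be^{-1/(\eta-1)}$ directly with no $o(1)$ correction to track. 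The optimization you expected to carry out through the free constant of $\psi_\lm$ is nothing more than the one-variable maximization of $\kp r^{-1}-\be r^{-\eta}$.
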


As in the proof of Theorem \ref{main1}, we divide (\ref{sharp2}) into upper and lower estimates.  

\begin{prop}
Under the assumption of Theorem \ref{main3}, 
\begin{equation}\label{upper3}
\limsup_{\be\to \infty}\be^{\frac{1}{\eta-1}}\left(\frac{\rho^{m^\ast}}{m^\ast}-\lm_{\max}(\be)\right)
\leq c_1.
\end{equation}
\end{prop}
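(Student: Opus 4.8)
The plan is to deduce (\ref{upper3}) directly from the definition of $\lm_{\max}(\be)$ by producing, for every large $\be$, an explicit subsolution of (EP) with constant $\rho^{m^\ast}/m^\ast-c_1\be^{-1/(\eta-1)}$. Following the proofs of Propositions \ref{de=0_lower} and \ref{de=0_upper}, I fix once and for all a function $\phi_0\in C^3(\R^d)$ with $\phi_0(x)=-\rho^{m^\ast-1}|x|$ for all $x\not\in B_R$. The computation already recorded there (with $\de=0$) gives
\[
\frac{\rho^{m^\ast}}{m^\ast}+F[\phi_0](x)=(d-1)\rho^{m^\ast-1}|x|^{-1}-\rho^{m^\ast-1}\al(x),\qquad |x|\ge R,
\]
which under (H1) (so $\al(x)=a|x|^{-1}$, $V(x)=|x|^{-\eta}$) becomes $A|x|^{-1}$ with $A:=\rho^{m^\ast-1}(d-1-a)>0$, the positivity being exactly condition (ii) of Theorem \ref{main2}. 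Hence, for $|x|\ge R$,
\[
\frac{\rho^{m^\ast}}{m^\ast}+F[\phi_0](x)-\be V(x)=\Phi_\be(|x|),\qquad \Phi_\be(r):=Ar^{-1}-\be r^{-\eta}.
\]

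The first step is then the elementary maximization of $\Phi_\be$ on $(0,\infty)$. Since $\eta>1$, the function $\Phi_\be$ tends to $-\infty$ as $r\to0^+$ and to $0^+$ as $r\to\infty$, so it attains a positive maximum at its unique critical point $r_\be:=(\eta\be/A)^{1/(\eta-1)}$; using $\be r_\be^{-(\eta-1)}=A/\eta$ one gets
\[
\max_{r>0}\Phi_\be(r)=\Phi_\be(r_\be)=A\,\frac{\eta-1}{\eta}\,r_\be^{-1}=(\eta-1)\Big(\frac{A}{\eta}\Big)^{\frac{\eta}{\eta-1}}\be^{-\frac{1}{\eta-1}}=c_1\,\be^{-\frac{1}{\eta-1}}.
\]
In particular $\rho^{m^\ast}/m^\ast+F[\phi_0](x)-\be V(x)\le c_1\be^{-1/(\eta-1)}$ whenever $|x|\ge R$, with no remainder term to absorb (unlike in Proposition \ref{prop.lower}), because under (H1) with $\de=0$ the profile $\phi_0$ is exact outside $B_R$.

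It remains to deal with $B_R$. Put $M:=\sup_{\overline{B}_R}(\rho^{m^\ast}/m^\ast+F[\phi_0])$, a finite constant independent of $\be$; since $\inf_{B_R}V>0$, for every $\be\ge\be_0:=M/\inf_{B_R}V$ and every $x\in B_R$ one has $\rho^{m^\ast}/m^\ast+F[\phi_0](x)-\be V(x)\le M-\be\inf_{B_R}V\le 0\le c_1\be^{-1/(\eta-1)}$. Together with the previous step, this shows that $(\rho^{m^\ast}/m^\ast-c_1\be^{-1/(\eta-1)},\phi_0)$ is a subsolution of (EP) for all $\be\ge\be_0$, so by definition $\lm_{\max}(\be)\ge\rho^{m^\ast}/m^\ast-c_1\be^{-1/(\eta-1)}$; equivalently $\be^{1/(\eta-1)}(\rho^{m^\ast}/m^\ast-\lm_{\max}(\be))\le c_1$ for all large $\be$, and passing to the limit yields (\ref{upper3}).

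In this direction there is no genuine analytic obstacle: the argument collapses to a single scalar minimization, and the only delicate point is the bookkeeping needed to check that $r_\be$ and $\Phi_\be(r_\be)$ reproduce exactly the constant $c_1$ in (\ref{sharp2}). The real work is reserved for the complementary lower bound, where one will have to run a test-function argument (multiplying by $\zeta^2$, integrating by parts, and using the quadratic lower bound for $h(\cdot;q)$ from Lemma \ref{m_2}) and localize carefully around the optimal radius $r_\be$.
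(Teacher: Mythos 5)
Your proof is correct and follows essentially the same route as the paper: choose the explicit test profile $\phi_0(x)=-\rho^{m^\ast-1}|x|$ outside $B_R$, reduce the estimate to the scalar optimization of $r\mapsto \kappa r^{-1}-\be r^{-\eta}$ (which produces exactly $c_1\be^{-1/(\eta-1)}$), and handle $B_R$ by taking $\be$ large so that $\be\inf_{B_R}V$ dominates. The only cosmetic difference is that you maximize over all $r>0$ rather than over $r\geq R$ and then verify $r_\beta>R$ for large $\be$; for the upper bound this is equivalent and slightly streamlines the bookkeeping.
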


\begin{proof}
Let $\phi_0\in C^3(\R^d)$ be any function such that $\phi_0(x)=-\rho^{m^\ast-1}|x|$ for all $x\not\in B_R$. Then, by direct computations, we see that, 
\begin{align*}
F[\phi_0](x)-\be V(x)=-\frac{\rho^{m^\ast}}{m^\ast}+\rho^{m^\ast-1}(d-1-a)|x|^{-1}-\be|x|^{-\eta},\quad x\not\in B_R.
\end{align*}
We set $\kp:=\rho^{m^\ast-1}(d-1-a)$ and $f(r):=\kp r^{-1}-\be r^{-\eta}$ for $r\geq R$. Since $\kp>0$ and $\eta>1$, we observe by direct computations that $f$ attains its maximum at $r=\max\{R,r_0\}$, where $r_0:=(\eta\be/\kp)^{\frac{1}{\eta-1}}$. By taking $\be$ so large that $(\eta\be/\kp)^{\frac{1}{\eta-1}}> R$, we obtain
\begin{equation*}
\frac{\rho^{m^\ast}}{m^\ast }-f(r_0)+F[\phi_0](x)-\be V(x)\leq 0,\qquad x\not\in B_R.
\end{equation*}
Furthermore, replacing $\be$ by a larger one which satisfies
\begin{equation*}
\be>\frac{\rho^{m^\ast}/m^\ast -f(r_0)+\sup_{B_R}F[\phi_0]}{\inf_{B_R}V},
\end{equation*}
we also have 
\begin{equation*}
\frac{\rho^{m^\ast}}{m^\ast }-f(r_0)+F[\phi_0](x)-\be V(x)\leq 0,\qquad x\in B_R.
\end{equation*}
Thus, $(\rho^{m^\ast}/m^\ast -f(r_0),\phi_0)$ is a subsolution of (EP). This implies by the definition of $\lm_{\max}(\be)$ that 
\begin{align*}
\frac{\rho^{m^\ast}}{m^\ast}-\lm_{\max}(\be)
\leq  f(r_0)
&=\kp\Big(\frac{\eta\be}{\kp}\Big)^{-\frac{1}{\eta-1}}-\be \Big(\frac{\eta\be}{\kp}\Big)^{-\frac{\eta}{\eta-1}}\\
&=(\eta-1)\Big(\frac{\kp}{\eta}\Big)^{\frac{\eta}{\eta-1}}\be^{-\frac{1}{\eta-1}}=c_1\be^{-\frac{1}{\eta-1}}
\end{align*}
for any $\be$ sufficiently large. Hence, we obtain (\ref{upper3})
\end{proof}

We turn to the proof of the lower bound.
\begin{prop}
Under the assumption of Theorem \ref{main3}, 
\begin{equation}\label{lower3}
\liminf_{\be\to \infty}\be^{\frac{1}{\eta-1}}\left(\frac{\rho^{m^\ast}}{m^\ast}-\lm_{\max}(\be)\right)
\geq c_1.
\end{equation}
\end{prop}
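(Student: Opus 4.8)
The plan is to establish the lower bound for $\rho^{m^\ast}/m^\ast-\lm_{\max}(\be)$ by extracting information from the equation satisfied by a solution $(\lm_{\max}(\be),\phi)$ of (EP), testing against a localized weight as in the proof of Proposition~\ref{de=0_upper}, and optimizing the location of the test function. The key point is that the constant $c_1$ arises from exactly the same optimization problem $\max_{r\ge R} (\kp r^{-1}-\be r^{-\eta})$ that produced the upper bound, and the matching lower bound should come from choosing the support of the test function near the optimal radius $r_0=(\eta\be/\kp)^{1/(\eta-1)}$, which tends to infinity as $\be\to\infty$.

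\begin{proof}
Fix $\ep>0$. Let $(\lm_{\max}(\be),\phi)\in\R\times C^3(\R^d)$ be a solution of (EP), and write $\psi:=\phi-\phi_0$ with $\phi_0(x)=-\rho^{m^\ast-1}|x|$ for $x\not\in B_R$, exactly as in Proposition~\ref{de=0_upper}. Then, with $V_0(x)=(d-1)\rho^{m^\ast-1}|x|^{-1}-\rho^{m^\ast-1}\al(x)$ for $x\not\in B_R$, the function $\psi$ solves
\begin{equation}\label{eq.psi.lower}
\lm_{\max}(\be)-\frac{\rho^{m^\ast}}{m^\ast}-\De\psi(x)+b_0(x)\cdot D\psi(x)+h(D\psi(x);D\phi_0(x))=\be V(x)-V_0(x),\quad x\not\in B_R,
\end{equation}
where $b_0(x)=\al(x)|x|^{-1}x$ and $h$ is as in \eqref{h(p;q)}. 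By Theorem~\ref{pre1}, $|D\psi|$ is bounded on $\R^d$, so Lemma~\ref{m_2} (with $r=\rho^{m^\ast-1}$, $K=\max\{\sup|D\psi|,r+1\}$) gives $h(D\psi(x);D\phi_0(x))\ge c|D\psi(x)|^2$ for $x\not\in B_R$. Using $b_0(x)\cdot D\psi(x)\le (c/2)|D\psi(x)|^2+C_\al|\al(x)|^2$, dropping the nonnegative term $(c/2)|D\psi|^2$, and recalling $\al(x)=a|x|^{-1}$, $V(x)=|x|^{-\eta}$ under (H1), we obtain from \eqref{eq.psi.lower} a differential inequality of the form
\begin{equation}\label{eq.super.lower}
-\De\psi(x)\ \le\ \frac{\rho^{m^\ast}}{m^\ast}-\lm_{\max}(\be)-\rho^{m^\ast-1}(d-1-a)|x|^{-1}+\be|x|^{-\eta}+C|x|^{-2},\quad x\not\in B_R.
\end{equation}

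Now I would test \eqref{eq.super.lower} against $\zeta_R^2$ where $\zeta_R(x):=R^{d/2}\zeta(x/R)$ is a rescaled bump supported in an annulus $\{r_0(1-\tau)<|x|<r_0(1+\tau)\}$ centered at the optimal radius $r_0=r_0(\be)=(\eta\be/\kp)^{1/(\eta-1)}\to\infty$, with $\int\zeta_R^2=1$ and $\|D\zeta_R\|_{L^2}^2\le C r_0^{-2}$. Integration by parts kills the Laplacian term up to $-\int \De\psi\,\zeta_R^2 = 2\int \zeta_R D\zeta_R\cdot D\psi\le \tfrac12 (c')\int\zeta_R^2|D\psi|^2 + C'\|D\zeta_R\|_{L^2}^2$; since I have discarded the $|D\psi|^2$ term too generously above I instead keep a fraction of it, so that the gradient contribution is absorbed and only $O(r_0^{-2})$ survives. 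On the support of $\zeta_R$ we have $|x|^{-1}=r_0^{-1}(1+O(\tau))$, $|x|^{-\eta}=r_0^{-\eta}(1+O(\tau))$, and $|x|^{-2}=O(r_0^{-2})=o(r_0^{-1})$ since $r_0\to\infty$. Therefore integrating \eqref{eq.super.lower} against $\zeta_R^2$ yields
\begin{equation}\label{eq.final.lower}
0\ \le\ \frac{\rho^{m^\ast}}{m^\ast}-\lm_{\max}(\be)-(1-C\tau)\kp\, r_0^{-1}+(1+C\tau)\be\, r_0^{-\eta}+o(r_0^{-1}).
\end{equation}
Plugging $r_0=(\eta\be/\kp)^{1/(\eta-1)}$ gives $\kp r_0^{-1}-\be r_0^{-\eta}=(\eta-1)(\kp/\eta)^{\eta/(\eta-1)}\be^{-1/(\eta-1)}=c_1\be^{-1/(\eta-1)}$, while the error terms are $O(\tau)\be^{-1/(\eta-1)}+o(\be^{-1/(\eta-1)})$. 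Rearranging \eqref{eq.final.lower} and multiplying by $\be^{1/(\eta-1)}$,
\begin{equation*}
\be^{\frac{1}{\eta-1}}\Big(\frac{\rho^{m^\ast}}{m^\ast}-\lm_{\max}(\be)\Big)\ \ge\ (1-C\tau)\,c_1 + o(1)\quad\text{as }\be\to\infty.
\end{equation*}
Letting $\be\to\infty$ and then $\tau\to0$ yields \eqref{lower3}.
\end{proof}

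\textbf{Main obstacle.} The delicate point is the handling of the test function: one must keep enough of the coercive term $c|D\psi|^2$ from Lemma~\ref{m_2} to absorb the cross term $2\int\zeta_R D\zeta_R\cdot D\psi$ arising from integration by parts, while simultaneously ensuring that the leftover gradient-of-weight contribution is $O(r_0^{-2})=o(r_0^{-1})$ — this is why the test function must be supported near the growing radius $r_0$ rather than pushed to infinity at a fixed scale. The scaling $\zeta_R(x)=R^{d/2}\zeta(x/R)$ with $R\asymp r_0$ is exactly what makes $\|D\zeta_R\|_{L^2}^2\asymp r_0^{-2}$ and keeps the annular oscillation of $|x|^{-1}$ under control by a factor $1+O(\tau)$; getting these two competing requirements to coexist, and checking that the $|x|^{-2}$ correction (coming from the $|\al|^2$ and $\De\phi_0$ lower-order terms) is genuinely negligible at scale $r_0^{-1}$, is the technical heart of the argument.
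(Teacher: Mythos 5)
Your proposal is correct and follows essentially the same route as the paper: subtract $\phi_0$, invoke Lemma~\ref{m_2} for the quadratic coercivity in $D\psi$, absorb the cross terms via Cauchy–Schwarz, and test against an annular bump rescaled to radius $\sim\be^{1/(\eta-1)}$ so that the gradient-of-weight contribution is $O(r_0^{-2})$ and the lower-order $|x|^{-2}$ term is negligible. One small slip: the normalization $\int\zeta_R^2\,dx=1$ requires $\zeta_R(x)=R^{-d/2}\zeta(x/R)$, not $R^{d/2}\zeta(x/R)$; this does not affect the argument.
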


\begin{proof}
Let $(\lm_{\max}(\be),\phi)\in\R\times C^3(\R^d)$ be a solution to (EP), and let $\phi_0\in C^3(\R^d)$ be any function such that $\phi_0(x)=-\rho^{m^\ast-1}|x|$ for all $x\not\in B_R$. Set $\psi:=\phi-\phi_0$. Then, similarly as in the proof of Proposition \ref{de=0_upper}, there exists some $c>0$ such that, for any $x\not\in B_R$, 
\begin{align*}
\frac{\rho^{m^\ast}}{m^\ast}-\lm_{\max}(\be)
&\geq -\De\psi+(a|x|^{-2}x)\cdot D\psi(x)+c|D\psi(x)|^2\\
&\qquad\qquad\qquad +\rho^{m^\ast-1}(d-1-a)|x|^{-1}-\be |x|^{-\eta}.
\end{align*}
Applying the Cauchy-Schwarz inequality to $(a|x|^{-2}x)\cdot D\psi$, we obtain
\begin{align*}
\frac{\rho^{m^\ast}}{m^\ast}-\lm_{\max}(\be)
&\geq -\De\psi(x)-C|x|^{-2}+\frac{c}{2}|D\psi(x)|^{2}+\kp|x|^{-1}-\be |x|^{-\eta}
\end{align*}
for all $x\not\in B_R$, where $\kp:=\rho^{m^\ast-1}(d-1-a)$ and $C>0$ is a constant depending only on $c$ and $a$. 

We now fix any $\zeta\in C_0^\infty(0,\infty)$ such that $\zeta\geq 0$ in $(0,\infty)$, $\supp\zeta\subset (R,R+1)$, and  $\int_{\R^d} \zeta(|x|)^{2}\,dx=1$. Furthermore, we define $\zeta_\ep\in C^\infty_0(\R^d)$, with $0<\ep<1$, by $\zeta_\ep(x):=\ep^{\frac{d}{2}}\zeta(\ep |x|)$. We also set $f(r):=\kp r^{-1}-\be r^{-\eta}$ for $r\geq R$. Then, since $\supp \zeta_\ep\subset (B_{\ep^{-1}R})^c\subset (B_{R})^c$, we see by the integration by parts formula and the Cauchy-Schwarz inequality that
\begin{align*}
\frac{\rho^{m^\ast}}{m^\ast}-\lm_{\max}(\be)
&\geq 2 \int_{\R^d}D\zeta_\ep\cdot (\zeta_\ep D\psi)\,dx -C\int_{\R^d}\zeta_\ep^2|x|^{-2}\,dx\\
&\qquad\qquad +\frac{c}{2}\int_{\R^d}\zeta_\ep^{2} |D\psi|^2\,dx+\int_{\R^d}\zeta_\ep^2f(|x|)\,dx\\ 
& \geq -C\int_{\R^d}|D\zeta_\ep(x)|^2dx -C\int_{\R^d}\zeta_\ep^2|x|^{-2}\,dx+\int_{\R^d}\zeta_\ep^2f(|x|)\,dx.
\end{align*}
Here and in the following, $C>0$ denotes various constants not depending on $\ep$ and $\be$. Hence, we obtain 
\begin{align*}
\frac{\rho^{m^\ast}}{m^\ast}-\lm_{\max}(\be)
\geq  -C\ep^2\int_{\R^d}\left(|D\zeta(y)|^2 +\frac{\zeta(y)^2}{|y|^2}\right)dy+\int_{\R^d}\zeta(y)^2 f\Big(\frac{|y|}{\ep}\Big)\,dy.
\end{align*}
We fix an arbitrary $\theta>0$ and choose $\ep$ so that $\ep\be^{\frac{1}{\eta-1}}=\theta$. Then, 
\begin{align*}
\be^{\frac{1}{\eta-1}}\Big(\frac{\rho^{m^\ast}}{m^\ast}-\lm_{\max}(\be)\Big)
& \geq  -C\be^{-\frac{1}{\eta-1}}\theta^2\int_{\R^d}\left(|D\zeta(y)|^2 +\frac{\zeta(y)^2}{|y|^2}\right)dy\\
&\qquad +\int_{\R^d}\zeta(y)^2\be^{\frac{1}{\eta-1}}f(\be^{\frac{1}{\eta-1}}\theta^{-1}|y|)\,dy.
\end{align*}
Since $
\be^{\frac{1}{\eta-1}}f(\be^{\frac{1}{\eta-1}}\theta^{-1}r)=\kp (\theta^{-1} r)^{-1}-(\theta^{-1} r)^{-\eta}=:g_\theta(r)$
for any $\be$, $\theta$, and $r$, we have
\begin{align}\label{g_theta}
\liminf_{\be\to\infty}\be^{\frac{1}{\eta-1}}\Big(\frac{\rho^{m^\ast}}{m^\ast}-\lm_{\max}(\be)\Big)
\geq \int_{\R^d}\zeta(y)^2g_\theta(|y|)\,dy.
\end{align}
Observe here that $g_\theta(r)$ attains its maximum at $r=\max\{R,r_\theta\}$, where $r_\theta:=\theta(\eta/\kp)^{\frac{1}{\eta-1}}$, and that $g_\theta(r_\theta)=(\eta-1)(\kp /\eta)^{\frac{\eta}{\eta-1}}=c_1$. Thus, letting $\theta$ so that $R<r_\theta<R+1$ and choosing $\zeta$ so that the right-hand side of (\ref{g_theta}) is arbitrarily close to $g_\theta(r_\theta)=c_1$, we obtain the desired estimate.
\end{proof}

\subsection*{Acknowledgment}
EC is partially supported by ANR-16-CE40-0015-01 (ANR project on Mean Field Games).
NI is supported in part by JSPS KAKENHI Grant Number 18K03343.

\end{document}